\documentclass[11pt,reqno]{amsart}

\usepackage[margin=1in]{geometry}
\usepackage{amsmath,amssymb,amsthm,mathtools}
\usepackage{aliascnt}
\numberwithin{equation}{section}
\usepackage{enumitem}
\usepackage{microtype}
\usepackage{hyperref}
\usepackage[nameinlink,capitalize,noabbrev]{cleveref}
\usepackage{cite}

\allowdisplaybreaks

\hypersetup{
  colorlinks=true,
  linkcolor=blue,
  citecolor=blue,
  urlcolor=blue
}

\newtheorem{theorem}{Theorem}[section]
\newaliascnt{proposition}{theorem}
\newtheorem{proposition}[proposition]{Proposition}
\aliascntresetthe{proposition}
\newaliascnt{lemma}{theorem}
\newtheorem{lemma}[lemma]{Lemma}
\aliascntresetthe{lemma}
\newaliascnt{corollary}{theorem}
\newtheorem{corollary}[corollary]{Corollary}
\aliascntresetthe{corollary}

\crefname{appendix}{Appendix}{Appendices}
\Crefname{appendix}{Appendix}{Appendices}

\theoremstyle{remark}
\newaliascnt{remark}{theorem}
\newtheorem{remark}[remark]{Remark}
\aliascntresetthe{remark}

\newcommand{\X}{\mathcal X}
\newcommand{\F}{\mathcal F}
\newcommand{\G}{\mathcal G}
\newcommand{\B}{\mathcal B}
\newcommand{\Pp}{\mathbb P}
\newcommand{\Ep}{\mathbb E}
\newcommand{\R}{\mathbb R}
\newcommand{\1}{\mathbf 1}
\newcommand{\ip}[2]{\langle #1,#2\rangle}
\newcommand{\norm}[1]{\lVert #1\rVert}
\newcommand{\normtwo}[1]{\lVert #1\rVert_2}
\newcommand{\normg}[1]{\lVert #1\rVert_{L^2(g)}}
\newcommand{\diam}{\operatorname{diam}_2}
\newcommand{\dist}{\operatorname{dist}_2}
\newcommand{\clip}{\operatorname{clip}_{[0,1]}}
\newcommand{\eps}{\varepsilon}
\newcommand{\TV}{\operatorname{TV}}

\title[Adaptive confidence sets without design smoothness]{Adaptive Confidence Sets for Binary Regression without Design Smoothness}
\author{P. M. Aronow and Patrick Lopatto}
\date{}
\subjclass[2020]{Primary 62G15; Secondary 62G08, 62G20}
\keywords{adaptive confidence sets, binary regression, random design, Besov spaces, nuisance parameters, minimax testing}

\begin{document}

\begin{abstract}
We study honest adaptive confidence sets for the regression function in
random-design binary regression under \(L^2(dx)\) loss. Assuming only known
bounds \(0<c\leq g\leq C<\infty\) on the unknown design density, we construct
asymptotically honest, rate-adaptive confidence sets without requiring $g$ to be smooth. Full adaptation is possible when the range of
regression-function smoothness spans at most a factor of two. Over wider
smoothness ranges, adaptation is achieved on the usual separated classes at
the corresponding testing rates \(n^{-2s/(4s+d)}\). A lower bound under the
uniform design shows that these separation rates are rate-optimal. This
answers a question raised by Mukherjee and Sen
\cite{MukherjeeSen2018}.
\end{abstract}

\maketitle

\section{Introduction}

\subsection{Motivation and overview}

Adaptive confidence sets address a basic tension in nonparametric inference: a set calibrated to the roughest admissible functions may be too large to be informative, while a smaller set tailored to smoother functions may lose coverage. The central question is whether a single procedure can retain uniform validity while shrinking at a rate that reflects the unknown smoothness. The minimax theory of adaptive confidence sets determines when these goals are compatible and identifies the relevant statistical barriers when they are not.

We are interested here in adaptive confidence sets for nonparametric regression problems. Although the construction we present readily generalizes, to simplify the exposition, we state our results in the specific case of random-design binary regression.  We observe
independent copies of a pair $(X,Y)$, where $X\in[0,1]^d$ and $Y\in\{0,1\}$,
with
\begin{equation}\label{eq:intro-model}
 X\sim g,
 \qquad
 \Pp(Y=1\mid X=x)=f(x).
\end{equation}
The regression function $f$ is the conditional success probability and is the
object of interest; the design density $g$ is an unknown nuisance parameter.
We measure uncertainty in the unweighted global norm
\begin{equation}\label{eq:intro-L2}
 \normtwo{h}^2=\int_{[0,1]^d}h(x)^2\,dx.
\end{equation}
Unlike the design-weighted norm $L^2(g)$, this loss does not change with the
distribution of the covariates and does not downweight regions merely because
they are sampled less often.  This makes the target natural when the regression
function itself, rather than only its behavior under one particular design, is
of interest.  It also creates a central nuisance difficulty: the data arrive
according to $g$, while the confidence set is evaluated with respect to
Lebesgue measure.

Suppose that $f$ belongs to a Besov ball of unknown smoothness
$\beta\in[\beta_-,\beta_+]$. By analogy with Stone's classical minimax
theory for random-design nonparametric regression \cite{Stone1982}, one expects that, when $\beta$ is known, the squared $L^2(dx)$ estimation
risk has the characteristic scale $\eps_n(\beta)^2$, where
\begin{equation}\label{eq:intro-estimation-rate}
 \eps_n(\beta)=n^{-\beta/(2\beta+d)}.
\end{equation}
In Gaussian nonparametric models, classical adaptive-estimation theory shows
 that the corresponding minimax rates can be attained without
knowing the smoothness index
\cite{Lepskii1991,DonohoJohnstone1995}. Related adaptive-estimation results
for random-design regression, including binary-response models, are available
under additional regularity assumptions on the design distribution
\cite{AntoniadisLeblanc2000,Baraud2002,MukherjeeSen2018}.

Honest adaptive uncertainty quantification is more restrictive. For an $s$-smooth class, the relevant
testing radius is
\begin{equation}\label{eq:intro-testing-rate}
 \rho_n(s)=n^{-2s/(4s+d)},
\end{equation}
and the identity
\begin{equation}\label{eq:intro-rate-identity}
 \rho_n(s)=\eps_n(2s)
\end{equation}
underlies the familiar factor-of-two smoothness boundary.  Over a smoothness range
contained in $[s,2s]$, honest adaptation is possible on the full parameter
space.  Over a wider range, rough functions lying within order $\rho_n(s)$ of
a smoother class cannot be reliably distinguished from that class and must be
excluded if one insists on the smaller smooth-class diameter.  This is the
classical regression-function testing barrier.  The question addressed here is
whether an unknown and potentially very rough design density creates an 
additional barrier.

\subsection{The unknown-design obstacle and our contribution}

The unknown design appears, at first, to create a serious extra difficulty.
If $a$ is a preliminary estimator of $f$ and $r=f-a$, then the regression
identity $\Ep(Y\mid X)=f(X)$ gives observable moments of the form
\[
 \Ep\{(Y-a(X))\varphi(X)\}
 =\int r(x)\varphi(x)g(x)\,dx.
\]
Thus the data naturally reveal the design-weighted residual $rg$, whereas the
radius of the desired confidence set must control the unweighted residual $r$.
When $g$ is smooth, one may try to estimate it and remove the weighting.  When
$g$ has jumps or oscillations at arbitrarily fine scales, however,
multiplication by $g$ can transfer energy between wavelet scales, so a direct
coefficientwise comparison between $rg$ and $r$ is unavailable.

This issue is pivotal in the work of Mukherjee and Sen
\cite{MukherjeeSen2018}, who developed adaptive estimators, goodness-of-fit
tests, and honest adaptive confidence sets for binary regression with unknown
random design.  Their confidence-set theorem estimates the nuisance density
and assumes that its lower smoothness index $\gamma_-$ satisfies
$2\beta_+<\gamma_-$.  This left open whether substantial smoothness of $g$ is
intrinsically necessary for honest adaptation under $L^2(dx)$ loss, or whether
it is a consequence of estimating and dividing by the design density.

We show that this design-smoothness requirement is not intrinsic.  Assume only known constants
$0<c\leq1\leq C<\infty$ such that
\begin{equation}\label{eq:intro-overlap}
 0<c\leq g(x)\leq C<\infty
 \quad\text{for almost every }x.
\end{equation}
Uniformly over this full class of design densities, we construct
asymptotically honest confidence sets that attain the minimax estimation
diameter throughout every factor-of-two smoothness range.  Over wider ranges,
the same diameters are attained on the usual separated classes, with separation
at the testing rate \eqref{eq:intro-testing-rate}.  A lower bound under the
uniform design shows that this separation rate is sharp in order.  Moreover, the design class always contains densities that belong to no
Besov space of positive smoothness (when $c<1<C$).  Thus, under fixed overlap, arbitrarily
rough design densities do not worsen the attainable confidence-set diameter or
the required separation rate.  This answers the question raised by Mukherjee
and Sen \cite{MukherjeeSen2018}.

The principal new contribution is an upper-bound mechanism that removes the
design-smoothness assumption. We show that, under fixed upper and lower bounds on $g$, the roughness of the design density affects neither the attainable confidence-set diameter nor the necessary separation rate. Their optimal orders remain governed solely by the classical regression-function estimation and testing barriers.
Moreover, this mechanism is not specific to binary responses and
extends to bounded responses with conditional mean $f$, as discussed in
\Cref{rem:bounded-responses} below.  The factor-of-two adaptation regimes,
separated-space formulation, and lower-bound strategy come from the existing
theory of adaptive confidence sets.

\subsection{The key idea: finite-dimensional coercivity}

The argument avoids estimating either $g$ or $1/g$.  Let $V_J$ be a
finite-dimensional wavelet approximation space, let $P_J$ be the orthogonal
projection onto $V_J$, and let
$(\varphi_{J,\lambda})_{\lambda\in\Lambda_J}$ be its canonical orthonormal
basis.  Given an adaptive pilot estimator $\widetilde f$, we use the projected center
$a=P_J\widetilde f\in V_J$.  Writing
\[
 \Phi_J(x)=\bigl(\varphi_{J,\lambda}(x)\bigr)_{\lambda\in\Lambda_J}
 \in\mathbb R^{D_J},
 \qquad
 D_J=|\Lambda_J|=\dim(V_J),
\]
the residual-moment vector associated with this center is
\begin{equation}\label{eq:intro-residual-moment}
\begin{aligned}
 \mu_J(a)
 &=\Ep\bigl[(Y-a(X))\Phi_J(X)\bigr] \\
 &=\left(
  \int_{\X}(f-a)(x)\varphi_{J,\lambda}(x)g(x)\,dx
 \right)_{\lambda\in\Lambda_J}.
\end{aligned}
\end{equation}
Empirical versions of $\mu_J(a)$ can be formed on two independent data blocks,
and, conditional on $a$, their inner product is unbiased for
\begin{equation}\label{eq:intro-wrong-quantity}
 \norm{\mu_J(a)}_{\ell^2}^2
 =\normtwo{P_J((f-a)g)}^2.
\end{equation}
The issue is therefore to turn control of the quantity on the right into
control of $\normtwo{f-a}$ without requiring any regularity of $g$.

The key observation is as follows.  Let
$M_g$ denote multiplication by $g$.  For every $v\in V_J$,
\begin{equation}\label{eq:intro-coercivity}
 \ip{P_J(gv)}{v}
 =\int_\X g(x)v(x)^2\,dx
 \geq c\normtwo v^2.
\end{equation}
Hence the compressed multiplication operator $P_JM_gP_J$ is uniformly
positive and invertible on $V_J$, even when $g$ is discontinuous or oscillates
at every scale.  Decomposing a residual $r$ as
$r=P_Jr+(I-P_J)r$ and using also $g\leq C$ gives the deterministic one-sided
bound
\begin{equation}\label{eq:intro-bound}
 \normtwo r^2
 \lesssim
 \normtwo{P_J(rg)}^2+\normtwo{(I-P_J)r}^2.
\end{equation}
Because $a\in V_J$, we have
$(I-P_J)r=(I-P_J)f$, so for a Besov-smooth regression function the second
term is a controlled approximation tail.  A decoupled empirical cross-product
can therefore provide a high-probability upper bound on the first term and
hence a valid confidence radius for the unweighted error.  This
finite-dimensional coercivity is the step that removes the need to estimate
the design density.

\subsection{Related literature}

The distinction between adaptive estimation and adaptive uncertainty
quantification is classical.  Honest confidence regions for nonparametric
regression were studied by Li \cite{Li1989}, while Low \cite{Low1997} and Cai
and Low \cite{CaiLow2004} developed general impossibility and adaptation
principles.  Work specifically on global $L^2$ confidence sets includes
Juditsky and Lambert-Lacroix \cite{JuditskyLambertLacroix2003}, Baraud
\cite{Baraud2004}, Genovese and Wasserman \cite{GenoveseWasserman2005}, Cai
and Low \cite{CaiLow2006}, and Robins and van der Vaart
\cite{RobinsVdV2006}.  The random-design regression application of Robins and
van der Vaart takes the design distribution as known and formulates the loss in
$L^2(P_X)$.  Here the design density is unknown, while honesty and diameter are
measured in the fixed, unweighted space $L^2(dx)$.

In density estimation, Bull and Nickl \cite{BullNickl2013} established the two
adaptation regimes that motivate the formulation used here: full adaptation
over a smoothness range no wider than a factor of two, and adaptation on
suitably separated parameter spaces over wider ranges.  The corresponding
separation scale is an instance of a minimax goodness-of-fit testing rate.
Related testing theory for Gaussian random-design regression was developed by
Ingster and Sapatinas \cite{IngsterSapatinas2009}.  
Carpentier \cite{Carpentier2015} studied the
closely related problem of testing a rough smoothness class against a nested
smoother class after deleting a critical neighborhood of the smoother class,
and Carpentier \cite{Carpentier2013} obtained analogous
adaptive-confidence-set results for $L^p$ loss in Gaussian regression.  A
different line of work replaces separation from smoother classes by
self-similarity, tail-regularity, or related restrictions that exclude statistically exceptional functions; see
\cite{GineNickl2010,Bull2012,HoffmannNickl2011,NicklSzabo2016,SzaboVdVVZ2015}.

The closest predecessor of the present paper is Mukherjee and Sen
\cite{MukherjeeSen2018}.  Their construction uses second-order wavelet
$U$-statistics together with an adaptive estimator of $g$, and their
confidence-set theorem imposes the design-smoothness condition described
above.  They discuss higher-order influence functions as one possible route to
weaker nuisance assumptions; such methods were developed for nonlinear
functionals and low-regularity nuisance problems by Robins and collaborators
\cite{RobinsEtAl2008,RobinsEtAl2017}.  We retain their binary-regression model,
unweighted loss, Besov regression classes, and separated-space formulation.
The new ingredient is the coercivity of $P_JM_gP_J$, which replaces the
design-density plug-in step and yields the same adaptation regimes uniformly
over all densities satisfying \eqref{eq:intro-overlap}.  The lower-bound
portion follows from a strategy similar to their sharpness
argument.  

Adaptive estimation in random-design regression is available under
much weaker conditions than those required by the earlier confidence-set
construction; see, for example, Antoniadis and Leblanc \cite{AntoniadisLeblanc2000} and Baraud \cite{Baraud2002}.

\subsection{Model, function spaces, and main result}

Fix an integer $d\ge1$ and set $\X=[0,1]^d$.  For each $n$, we observe independent pairs
$(X_i,Y_i)_{i=1}^n$ satisfying
\begin{equation}\label{eq:model}
 X_i\sim g,
 \qquad
 Y_i\mid X_i=x\sim\operatorname{Bernoulli}(f(x)).
\end{equation}
The design density belongs to
\begin{equation}\label{eq:G}
 \G(c,C)=
 \left\{g:\int_\X g(x)\,dx=1,\quad c\leq g\leq C
 \text{ almost everywhere}\right\},
 \qquad 0<c\leq1\leq C<\infty.
\end{equation}
The constants $M,c,C,\beta_-,\beta_+$ are treated as known throughout.
Adaptation refers to the unknown smoothness label $\beta$; we do not consider
adaptation to the Besov radius or to the overlap constants.

Fix $S>\beta_+$ and a compactly supported, tensor-product,
boundary-corrected orthonormal wavelet basis of Cohen--Daubechies--Vial type
\cite{CohenDaubechiesVial1993}, constructed from a one-dimensional wavelet
system with H\"older--Zygmund regularity $R_{\mathrm w}>S$ and an integer
number $N_{\mathrm w}>S$ of vanishing moments.  The properties of this basis
used below are recorded in \cref{app:wavelets}.  We fix a coarse resolution
level $J_0$ sufficiently large so that the fixed-prototype boundary description
in \cref{app:wavelets} holds at every level $j\geq J_0$.  This choice depends
only on the fixed one-dimensional wavelet system and its boundary correction.
For $J\geq J_0$, let $V_J$, $P_J$, and $Q_j=P_{j+1}-P_j$ denote the
approximation spaces and orthogonal projections specified there.  The
appendix also fixes the orthonormal scaling basis
$(\varphi_{J,\lambda})_{\lambda\in\Lambda_J}$ of $V_J$ and the dimension
$D_J=\dim(V_J)$.

For $s\geq0$, define
\begin{equation}\label{eq:Besov-norm}
 \norm{h}_{\mathbb B^s_{2,\infty}}
 =2^{J_0s}\normtwo{P_{J_0}h}
 +\sup_{j\geq J_0}2^{js}\normtwo{Q_jh}.
\end{equation}
Let $\mathbb B^s_{2,\infty}([0,1]^d)$ denote the space on which the norm in
\eqref{eq:Besov-norm} is finite.  Set
\begin{equation}\label{eq:function-classes}
 \B_s(M)=\left\{h\in L^2(dx):
 \norm{h}_{\mathbb B^s_{2,\infty}}\leq M\right\},
 \qquad
 \F_s(M)=\left\{f\in\B_s(M):0\leq f\leq1\text{ a.e.}\right\}.
\end{equation}
By \cref{prop:app-wavelet-estimates}, we have the exact nesting property
\begin{equation}\label{eq:nesting}
 t\geq s
 \quad\Longrightarrow\quad
 \B_t(M)\subseteq\B_s(M),
 \qquad
 \F_t(M)\subseteq\F_s(M).
\end{equation}
The same proposition gives a constant $A_B$, depending only on $\beta_-$ and
the chosen basis, such that, uniformly for
$s\in[\beta_-,\beta_+]$ and $J\geq J_0$,
\begin{equation}\label{eq:approximation}
 \normtwo{(I-P_J)h}\leq A_BM2^{-Js},
 \qquad h\in\B_s(M).
\end{equation}

For a nonempty set $C\subset L^2(dx)$, write
\begin{equation}\label{eq:diam-radius}
 \diam(C)=\sup_{u,v\in C}\normtwo{u-v},
\end{equation}
with $\diam(\varnothing) = 0$. 

We call a sequence of random sets $C_n$ \emph{honest} over a sequence of
parameter spaces $\Theta_n$ at level $1-\alpha$ if
\begin{equation}\label{eq:honesty-definition}
 \liminf_{n\to\infty}\inf_{(f,g)\in\Theta_n}
 \Pp_{f,g}(f\in C_n)\geq1-\alpha.
\end{equation}
It is \emph{adaptive} over a smoothness range if, for each $\beta$ in that
range, its diameter is of order $\eps_n(\beta)$ uniformly over the
corresponding class.

The estimation and testing radii are
\begin{equation}\label{eq:rates}
 \eps_n(s)=n^{-s/(2s+d)},
 \qquad
 \rho_n(s)=n^{-2s/(4s+d)}.
\end{equation}
As noted above,
\begin{equation}\label{eq:rate-identity}
 \eps_n(2s)^2=\rho_n(s)^2.
\end{equation}

When $\beta_+>2\beta_-$, define
\begin{equation}\label{eq:grid}
 N=\left\lceil\log_2(\beta_+/\beta_-)\right\rceil,
 \qquad
 s_j=2^{j-1}\beta_-,
 \quad j=1,\ldots,N.
\end{equation}
Then $s_N<\beta_+\leq2s_N$.  For $L>0$, define
\begin{equation}\label{eq:separated-space}
 \F_n^{\mathrm{sep}}(L)
 =
 \F_{s_N}(M)
 \cup
 \bigcup_{j=1}^{N-1}
 \left\{
 f\in\F_{s_j}(M):
 \dist\bigl(f,\F_{s_{j+1}}(M)\bigr)
 \geq L\rho_n(s_j)
 \right\},
\end{equation}
where
\begin{equation}\label{eq:distance-class}
 \dist(f,\mathcal A)
 =
 \inf_{h\in\mathcal A}\normtwo{f-h}.
\end{equation}
Thus each rougher class is separated, at its testing rate, from the next
smoother regression class.

We can now state the main theorem.

\begin{theorem}\label{thm:main}
Fix $0<\beta_-\leq\beta_+<S$, $M>0$,
$0<c\leq1\leq C<\infty$, and $0<\alpha,\alpha'<1$.

\begin{enumerate}[label=\textup{(\roman*)}]
\item If $\beta_+\leq2\beta_-$, there are random sets $C_n$ and a constant
$K<\infty$ such that
\begin{equation}\label{eq:main-narrow-coverage}
 \liminf_{n\to\infty}
 \inf_{\substack{\beta\in[\beta_-,\beta_+]\\
 f\in\F_\beta(M),\ g\in\G(c,C)}}
 \Pp_{f,g}(f\in C_n)\geq1-\alpha,
\end{equation}
and, for every $\beta\in[\beta_-,\beta_+]$,
\begin{equation}\label{eq:main-narrow-diameter}
 \limsup_{n\to\infty}
 \sup_{\substack{f\in\F_\beta(M)\\g\in\G(c,C)}}
 \Pp_{f,g}\left\{\diam(C_n)>K\eps_n(\beta)\right\}
 \leq\alpha'.
\end{equation}

\item If $\beta_+>2\beta_-$, there exist random sets $C_n$ and constants
$L_0,K<\infty$ such that, for every $L\geq L_0$,
\begin{equation}\label{eq:main-wide-coverage}
 \liminf_{n\to\infty}
 \inf_{\substack{f\in\F_n^{\mathrm{sep}}(L)\\g\in\G(c,C)}}
 \Pp_{f,g}(f\in C_n)\geq1-\alpha,
\end{equation}
and, for every $\beta\in[\beta_-,\beta_+]$,
\begin{equation}\label{eq:main-wide-diameter}
 \limsup_{n\to\infty}
 \sup_{\substack{f\in\F_n^{\mathrm{sep}}(L)\cap\F_\beta(M)\\
 g\in\G(c,C)}}
 \Pp_{f,g}\left\{\diam(C_n)>K\eps_n(\beta)\right\}
 \leq\alpha'.
\end{equation}
\end{enumerate}

All constants may depend only on the displayed fixed
parameters, $d$, and the chosen wavelet basis.
\end{theorem}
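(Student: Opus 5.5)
The plan follows the Bull--Nickl / Robins--van der Vaart template: sample splitting, a pilot estimator, and a decoupled $U$-type statistic. The design-density plug-in is replaced by the coercivity bound \eqref{eq:intro-bound}.

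\emph{Step 1: pilot estimator and projected center.} Split the sample into three blocks of size $\asymp n$. On the first block, construct an adaptive estimator $\widetilde f$ satisfying
\[
\sup_{f\in\F_\beta(M),\,g\in\G(c,C)}\Ep\normtwo{\widetilde f-f}^2\lesssim\eps_n(\beta)^2
\]
for all $\beta\in[\beta_-,\beta_+]$. For this I would use Lepski's method over least-squares projection estimators on $V_j$, where least squares is computed in the empirical $L^2(g)$ norm. The bound $c\le g\le C$ makes the empirical Gram matrix of $\Phi_j$ well conditioned whenever $D_j\log D_j\ll n$, via a matrix Bernstein inequality. No smoothness of $g$ is needed, since $\normtwo{\cdot}$ and $L^2(g)$ norms are equivalent. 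Then fix $J$ with $2^{J}\asymp n^{2/(4\beta_-+d)}$, so that $D_J\asymp n^{2d/(4\beta_-+d)}$, and set $a=P_J\widetilde f$.

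\emph{Step 2: the test statistic.} Form the empirical residual moments $\hat\mu^{(2)},\hat\mu^{(3)}$ of $\mu_J(a)$ on blocks 2 and 3, and let $T=\ip{\hat\mu^{(2)}}{\hat\mu^{(3)}}$. Conditional on block 1, $T$ is unbiased for $\normtwo{P_J((f-a)g)}^2$. Its variance is at most
\[
\lesssim D_J/n^2+\normtwo{P_J((f-a)g)}^2/n,
\]
using $|Y-a(X)|\lesssim 1+\norm{a}_\infty$. To keep $\norm a_\infty$ bounded, I would clip $\widetilde f$ before projecting, and use $\norm{P_Jh}_\infty\lesssim\norm h_\infty$ for the wavelet projection. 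Chebyshev then gives, with probability $1-\alpha/2$,
\[
\normtwo{P_J((f-a)g)}^2\le T+z\sqrt{D_J}/n+\tfrac12\normtwo{P_J((f-a)g)}^2+O(1/n).
\]
Combining this with \eqref{eq:intro-bound} and $(I-P_J)(f-a)=(I-P_J)f$ yields
\[
\normtwo{f-a}^2\le K_1\bigl(T_+ +\sqrt{D_J}/n\bigr)+K_2 M^2 2^{-2J\beta_-}.
\]
The choice of $J$ makes $\sqrt{D_J}/n\asymp 2^{-2J\beta_-}\asymp\rho_n(\beta_-)^2$. Define $C_n$ as the $L^2(dx)$ ball centered at $a$ with squared radius $R_n^2$ equal to this right-hand side, with an appropriate constant. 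By construction $C_n$ is honest uniformly over $\F_{\beta_-}(M)\times\G(c,C)$.

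\emph{Step 3: diameter in case (i).} Since $\Ep T\le C^2\normtwo{f-a}^2\lesssim\eps_n(\beta)^2$ and $\rho_n(\beta_-)=\eps_n(2\beta_-)\le\eps_n(\beta_+)\le\eps_n(\beta)$ when $\beta\le 2\beta_-$, Markov's inequality gives $R_n\lesssim\eps_n(\beta)$ with probability $1-\alpha'$.

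\emph{Step 4: case (ii).} Here I would run Bull--Nickl style sequential tests along the grid $s_1<\dots<s_N$. For $j=1,\dots,N-1$, test $H_0:f\in\F_{s_{j+1}}(M)$ using the statistic of Step 2 with level $J_j$ tuned to $s_j$, so that $2^{-2J_js_j}\asymp\sqrt{D_{J_j}}/n\asymp\rho_n(s_j)^2$. The centering is $a_j=P_{J_j}\widetilde f$, where $\widetilde f$ is the adaptive pilot, which is accurate to order $\eps_n(s_{j+1})=\rho_n(s_j)$ under $H_0$. Let $\hat j$ be the first index at which the test rejects, and output the ball from Step 2 built with level $J_{\hat j}$ and radius calibrated to $s_{\hat j}$. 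If no test rejects, use the analogue for $s_N$.

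For honesty, suppose $f\in\F_{s_j}(M)$ is separated from $\F_{s_{j+1}}(M)$ by $L\rho_n(s_j)$. Then $\normtwo{f-a_{j'}}\ge L\rho_n(s_j)$ for every center fit under the smoother hypothesis, and the one-sided coercivity bound converts this into $\Ep T\gtrsim L^2\rho_n(s_j)^2-O(2^{-2J_js_j})$. For $L\ge L_0$ the test therefore rejects with high probability. Accepting too early is harmless, because every earlier ball is at least as large and still covers.

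For the diameter, if $f\in\F_\beta$ with $\beta\in[s_k,s_{k+1}]$, the tests at indices $j\ge k$ reject only with small probability (level $\alpha'/N$). On that event, the radius is $\lesssim\rho_n(s_{\hat j})\le\rho_n(s_k)=\eps_n(s_{k+1})\le\eps_n(\beta)$, up to $T$-fluctuations handled as in Step 3.

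\emph{Main obstacle.} I expect the main difficulty in Step 4 to be the power of the separation tests. The coercivity bound \eqref{eq:intro-bound} is one-sided: it bounds $\normtwo r$ above by $\normtwo{P_J(rg)}$ plus a tail, and that is exactly what gives honesty. Power, however, requires the \emph{lower} bound $\normtwo{P_J(rg)}\gtrsim\normtwo{P_Jr}$. The first thing I would try is to derive this from the same identity $\ip{P_J(gv)}{v}\ge c\normtwo v^2$ via Cauchy--Schwarz, which gives $\normtwo{P_J(gv)}\ge c\normtwo v$ for $v\in V_J$, and then handle the cross term $P_J(g(I-P_J)r)$. That cross term has norm at most $C\normtwo{(I-P_J)f}$, which is of the same order as the separation only up to constants, and that is precisely why the statement needs $L\ge L_0$.
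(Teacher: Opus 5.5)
\textbf{Gap in Step 4: the test is centered at an estimate that need not lie in the null class.}

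Your case (i) argument (Steps 1--3) is essentially the paper's: a rate-adaptive pilot (the paper selects by hold-out validation rather than Lepski), a ball centered at $P_J\widetilde f$ with $2^J\asymp n^{2/(4\beta_-+d)}$, the decoupled cross-product with its $q/\ell+D_J/\ell^2$ variance bound, and the coercivity inequality.

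The gap is in Step 4. You center the test of $H_0\colon f\in\F_{s_{j+1}}(M)$ at $a_j=P_{J_j}\widetilde f$ and then assert $\normtwo{f-a_j}\geq L\rho_n(s_j)$ for $f$ in the $j$th shell. That lower bound is valid only for centers lying in $\F_{s_{j+1}}(M)$. The adaptive pilot is not constrained to that class. With your center, the conditional mean of $T$, which is at most $C^2\normtwo{f-a_j}^2$, measures the pilot's error, and you only have upper bounds on that error.

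The claimed inequality is in fact false. Take $f=p_0+u$, where $p_0$ is a small constant and $u$ is a level-$j^*$ detail block spread evenly over the interior wavelets. Choose $2^{j^*}$ to be the smallest power of two exceeding $(M/L)^{1/(2s_j)}n^{1/(4s_j+d)}$, and set $\normtwo{u}=2L\rho_n(s_j)$. For large $n$ this gives:
\begin{enumerate}[label=\textup{(\alph*)}]
\item $f\in\F_{s_j}(M)$;
\item $\dist(f,\F_{s_{j+1}}(M))\geq 2L\rho_n(s_j)-M2^{-2s_jj^*}\geq L\rho_n(s_j)$;
\item $f\in\F_{s_{j+1}}(C'M)$ for some $C'$ depending only on $s_j$.
\end{enumerate}
A pilot that does not use $M$, such as the hold-out pilot or a standard Lepski rule, therefore has $\Ep\normtwo{\widetilde f-f}^2\lesssim\eps_n(s_{j+1})^2=\rho_n(s_j)^2$, with a constant independent of $L$. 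Hence $\Ep\normtwo{f-a_j}^2\lesssim\rho_n(s_j)^2$ uniformly in $L$. Enlarging $L$ then buys no power: the rejection probability against such $f$ at any threshold $\tau\rho_n(s_j)^2$ does not improve as $L$ grows.

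Your coverage argument in case (ii) rests on the test at the true shell index rejecting, so it collapses. Your diameter bound also needs $\hat j\leq k(f)$, to control the tail of $f$ beyond $J_{\hat j}$, so it fails for the same reason. Indeed, one can check the failure is real. With $g\equiv1$, add to this $f$ a small block of norm $\epsilon_0\rho_n(s_j)$ at a level strictly between $J_n(s_{j+1})$ and $J_n(s_j)$. For $L$ large, the resulting shell-$j$ function is sent to a ball with index larger than $j$, and that ball misses it.

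\textbf{The missing idea: center the test at the metric projection onto the null class.} The paper uses the $L^2(dx)$ metric projection $\widehat h_{s_j}$ of $\widetilde f$ onto the closed convex set $\F_{s_{j+1}}(M)$; see \eqref{eq:metric-projection}. This center gives both error controls.
\begin{enumerate}[label=\textup{(\roman*)}]
\item Under $H_0$, $f$ is feasible, so $\normtwo{f-\widehat h_{s_j}}\leq2\normtwo{f-\widetilde f}$. Hence $\Ep q\lesssim\rho_n(s_j)^2$, and Markov's inequality controls the type I error at threshold $\tau\rho_n(s_j)^2$ for large $\tau$.
\item Under the alternative, $\normtwo{f-\widehat h_{s_j}}\geq\dist(f,\F_{s_{j+1}}(M))\geq L\rho_n(s_j)$ holds deterministically, whatever the pilot does.
\end{enumerate}
Because $\widehat h_{s_j}\notin V_J$, the tail to control is now $(I-P_J)(f-\widehat h_{s_j})$ rather than $(I-P_J)f$. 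It is at most $2A_BM2^{-Js_j}\lesssim\rho_n(s_j)$, since $\widehat h_{s_j}\in\B_{s_{j+1}}(M)\subseteq\B_{s_j}(M)$. Then \eqref{eq:coercive-total} gives $q^{1/2}\geq(cL-B_0)\rho_n(s_j)$, so the test rejects on the event $q\leq\overline q$.

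Your Cauchy--Schwarz lower bound and your cross-term estimate are exactly what that inequality uses; only the center of the test must change. The confidence balls themselves can stay centered at $P_J\widetilde f$.

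A minor point: in your diameter step, the tests under their nulls are those with $j<k$, not $j\geq k$.
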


\begin{remark}\label{rem:comparison-MS}
The factor-of-two distinction, the separated-space formulation, and the order
of the testing radius in \cref{thm:main} come from the existing
adaptive-confidence-set theory; see
\cite{BullNickl2013,MukherjeeSen2018}.  The principal content specific to the
present paper is that the same conclusions hold uniformly over the full overlap
class $\G(c,C)$, without any positive smoothness assumption on the design
density.  The separation order is minimax-optimal for the associated testing problem.
For confidence sets, the corresponding sharpness result holds when
$2\alpha+\alpha'<1$; see \cref{thm:separation-sharp}.
\end{remark}

\begin{remark}\label{rem:bounded-responses}
The upper-bound construction uses the conditional distribution of the
response only through
\[
 0\leq Y\leq1\quad\text{almost surely},
 \qquad
 \Ep(Y\mid X)=f(X).
\]
Indeed, these conditions give the squared-loss identity used for validation,
the bound $\operatorname{Var}(Y\mid X)\leq1/4$ used in the pilot estimator risk 
argument, and the uniform boundedness required for the residual
cross-product estimate.  Thus the same upper-bound construction and proof
apply to random-design regression with any response supported on $[0,1]$
and conditional mean $f$.  We formulate the results for binary regression, where the conditional response law is determined entirely by $f$, to fix a concrete example with a matching lower bound.
\end{remark}

The following consequence, proved in \cref{sec:sharpness}, makes the
rough-design conclusion explicit.

\begin{corollary}\label{cor:threshold}
The conclusions of \cref{thm:main} hold uniformly over the full design class
$\G(c,C)$, without any positive Besov smoothness assumption on the design
density.  Moreover, when $c<1<C$, $\G(c,C)$ contains a density $g$ such
that
\[
 g\notin B^\gamma_{2,\infty}([0,1]^d)
 \qquad\text{for every }\gamma>0.
\]
\end{corollary}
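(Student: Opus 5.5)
The first assertion is immediate from \cref{thm:main}: its hypotheses on the design only require $g\in\G(c,C)$, and the suprema and infima in \eqref{eq:main-narrow-coverage}--\eqref{eq:main-wide-diameter} range over the whole of $\G(c,C)$. No smoothness of $g$ enters. So the real content is the existence of a density $g\in\G(c,C)$ lying outside every $B^\gamma_{2,\infty}$ with $\gamma>0$. I would take the Besov space to be defined by the wavelet norm \eqref{eq:Besov-norm}. Since that norm is monotone in $\gamma$ by \eqref{eq:nesting}, it suffices to show that $\sup_{j}2^{j\gamma}\normtwo{Q_jg}=\infty$ for every $\gamma>0$.

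The plan is to exhibit a $g$ whose detail energy $\normtwo{Q_jg}$ does not tend to zero faster than every power of $2^{-j}$. The simplest candidate is a single jump in the first coordinate:
\[
 g(x)=a\1\{x_1<t\}+b\1\{x_1\ge t\},
\]
with $c\le a<1<b\le C$ and $ta+(1-t)b=1$. This is possible precisely because $c<1<C$. A jump only gives $\normtwo{Q_jg}\asymp 2^{-j/2}$, which excludes $\gamma\ge1/2$ but not small $\gamma$, so it is not enough. I would therefore replace it by a rough bounded function. Take
\[
 g=1+\eta\,h,
\qquad
 h=\sum_{k\ge1}k^{-2}\,\psi^{(k)},
\]
where each $\psi^{(k)}$ is a $\pm1$-valued, mean-zero Haar-type oscillation in $x_1$ at frequency $2^{m_k}$. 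Here $m_k$ grows rapidly, and $\eta$ is small enough that $\eta\sum_k k^{-2}\le\min(1-c,\,C-1)$. Then $g$ integrates to $1$ and satisfies $c\le g\le C$. If $c<1<C$ fails, this step is exactly where the construction would break.

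The key step is a lower bound on the detail energy of $g$ at the scales $m_k$. I would aim for
\[
 \normtwo{(I-P_{m_k})g}\;\gtrsim\;k^{-2},
\]
so that
\[
 \sup_{j\ge m_k}2^{j\gamma}\normtwo{Q_jg}\;\gtrsim\;2^{m_k\gamma}k^{-2}\Big/\sum\nolimits_{\ell}\cdots,
\]
which diverges for every $\gamma>0$ once $m_k$ grows, say, like $2^k$. To get the lower bound, I would use $\normtwo{(I-P_J)g}\ge\normtwo{(I-P_J)\psi^{(k)}}\,k^{-2}\eta-\text{(other terms)}$. For the term $\psi^{(k)}$ at level $J=m_k-r$, with $r$ a fixed offset, the projection $P_J$ averages over cells of size $2^{-J}$. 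The Haar oscillation is only partially captured there, because a smooth wavelet space cannot reproduce a square wave at its own scale. Its $L^2$ mass is of order one, so $(I-P_J)\psi^{(k)}$ retains order-one energy. Terms with $\ell>k$ oscillate faster still. Terms with $\ell<k$ are nearly captured by $P_J$, with error $\lesssim 2^{-(m_k-m_\ell)/2}$ by the jump estimate. These can be absorbed when the $m_k$ are sparse enough.

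The main obstacle is making the claim that ``a square wave of frequency $2^{m}$ keeps order-one energy outside $V_{m-r}$'' rigorous for a general Cohen--Daubechies--Vial basis. I would handle this by a cleaner route. Choose $\psi^{(k)}(x)=\operatorname{sign}\sin(2\pi 2^{m_k}x_1)$ and use \eqref{eq:approximation}-type bounds in reverse: if $\normtwo{(I-P_J)g}\le A2^{-J\gamma}$ for all $J$, then $g\in\B_\gamma$. Conversely, I would show directly that $g\notin H^\gamma$-type spaces via Fourier or difference moduli, namely
\[
 \normtwo{g(\cdot+he_1)-g}\ge \eta k^{-2}
 \quad\text{for } h=2^{-m_k-1}.
\]
This bound is elementary, since translating by half a period flips the sign of $\psi^{(k)}$. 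I would then invoke the standard equivalence, recorded in \cref{app:wavelets}, between the wavelet norm and the $L^2$ modulus-of-smoothness norm for $0<\gamma<\min(R_{\mathrm w},N_{\mathrm w})$. Under that equivalence, the bound above gives $\sup_h h^{-\gamma}\omega(g,h)=\infty$, as required.
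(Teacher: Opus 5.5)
Your proposal is correct. The first assertion is handled exactly as in the paper. For the rough density, however, you take a different route.

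The paper works inside the fixed basis. It sets $h=\sum_{j\ge j_1}H_j$ with $H_j=j^{-2}m_j^{-1/2}\sum_{k\in\mathcal K_j}\psi_{j,k}$, using the interior wavelets from \cref{prop:app-wavelet-estimates}. Then $Q_jh=H_j$ and $\normtwo{Q_jh}=j^{-2}$ exactly. Bounded overlap and $\norm{\psi_{j,k}}_\infty\lesssim2^{jd/2}$ give $\norm{H_j}_\infty\lesssim j^{-2}$. Non-membership is then read off from the wavelet norm with no estimation. The only inputs are the wavelet characterization for $0<s<S$ and the embedding $B^\gamma_{2,\infty}\subset B^s_{2,\infty}$.

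Your lacunary square waves in $x_1$ give an explicit, basis-free density that is piecewise constant in $x_1$. You certify its roughness through the first-order $L^2$ modulus of smoothness. This has three advantages:
\begin{itemize}
\item it proves non-membership directly in the classical difference-based definition;
\item it needs the wavelet characterization only to return to the paper's wavelet-defined spaces;
\item it avoids the detail-energy estimate for a square wave against a Cohen--Daubechies--Vial basis, which you correctly flagged as awkward.
\end{itemize}

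Two points still need to be written out.
\begin{enumerate}
\item Your difference bound must handle the terms $\ell\ne k$, not only the sign flip of the $k$th term.
\begin{itemize}
\item For $\ell>k$, the shift $2^{-m_k-1}$ is an integer multiple of the period $2^{-m_\ell}$, provided the $m_\ell$ are strictly increasing integers. So $\psi^{(\ell)}$ is invariant.
\item For $\ell<k$, the difference is supported on a set of measure at most $2^{m_\ell-m_k}$. Its $L^2$ norm is therefore at most $2\cdot2^{(m_\ell-m_k)/2}$.
\item Sparsity of $(m_k)$ absorbs these errors against the main contribution $2\sqrt{1-h}\,\eta k^{-2}$ of the $k$th term on $\{x_1\le1-h\}$.
\end{itemize}
\item The first-order modulus characterizes $B^\gamma_{2,\infty}$ only for $\gamma<1$. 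For larger $\gamma$ you need the same embedding step as the paper, passing to some $0<s<\min\{1,\gamma,S\}$.
\end{enumerate}
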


\subsection{Proof ideas and organization}\label{sec:proof-ideas}

We first split the sample into three independent blocks, denoted by
$I_0,I_1$, and $I_2$, and then divide $I_2$ into two blocks $A$ and $B$.
On $I_0$ we construct a finite family of bounded wavelet least-squares
estimators, and on $I_1$ we select among them by independent validation.  The
resulting bounded adaptive pilot estimator $\widetilde f$, which depends only on
$I_0$ and $I_1$, satisfies
\begin{equation}\label{eq:proof-idea-pilot}
 \sup_{f\in\F_\beta(M),\,g\in\G(c,C)}
 \Ep_{f,g}\normtwo{\widetilde f-f}^2
 \lesssim n^{-2\beta/(2\beta+d)}
\end{equation}
uniformly over $\beta\in[\beta_-,\beta_+]$.

Given any bounded center function $a$ measurable with respect to $I_0$ and
$I_1$, we use $A$ and $B$ to form two independent empirical residual-moment
vectors.  Their inner product is a decoupled cross-product whose conditional
mean is
\[
 q_J=\normtwo{P_J((f-a)g)}^2
\]
and whose conditional variance is bounded above by
\[
 \frac{q_J}{\ell}+\frac{D_J}{\ell^2},
 \qquad
 \ell=|A|=|B|\asymp n.
\]
Adding a suitable fluctuation term to this statistic yields a
high-probability upper bound on $q_J$.  For the confidence ball we take
$a=P_J\widetilde f$, so the high-frequency part of $f-a$ is exactly the
Besov tail of $f$.  The coercivity inequality \eqref{eq:intro-bound} then
converts the observable bound into a confidence radius for $\normtwo{f-a}$
without estimating $g$.

At a base smoothness $s$, choosing
$2^J\asymp n^{2/(4s+d)}$ balances the squared Besov approximation tail with the
cross-product fluctuation; both are of order $\rho_n(s)^2$.  Together with
\eqref{eq:proof-idea-pilot} and
$\rho_n(s)=\eps_n(2s)$, this gives the optimal diameter throughout
$\beta\in[s,2s]$.  Over wider ranges, we project the pilot estimator onto the next
smoother ball and use the same residual bound to test adjacent smoothness
classes.  A finite sequence of such tests selects the appropriate dyadic shell,
after which the corresponding factor-of-two confidence ball is reported.

The sharpness proof uses a Rademacher wavelet mixture under the uniform design.
It is close in structure to the lower-bound argument of Mukherjee and Sen and
is included to verify the exact separation order for the classes used here.
The same section constructs a density in $\G(c,C)$ with no positive Besov
smoothness and records the elementary identifiability obstruction that arises
if the design vanishes on an open set.

\Cref{sec:main-proof} constructs the confidence sets and proves
\cref{thm:main}.  The pilot and residual-bound ingredients are proved in
\cref{sec:pilot-proof,sec:bound-proof}, respectively, and
\cref{sec:sharpness} contains the lower bound and the design-density results.

\subsection{Conventions}

Throughout, constants denoted by $c_0,c_1,\ldots$ or $C_0,C_1,\ldots$ are
positive and finite; their values may change from line to line.  Unless stated
otherwise, they depend only on the fixed model parameters, $d$, and the chosen
wavelet basis, never on $n$, $J$, $f$, or $g$.  We write $a_n\lesssim b_n$ if
$a_n\leq Cb_n$ for such a constant $C$, and $a_n\asymp b_n$ if both
$a_n\lesssim b_n$ and $b_n\lesssim a_n$.

Whenever pointwise evaluation of an element $h\in L^2(dx)$ is required, we
use the following fixed representative. For $x\in[0,1]^d$, take the limsup
of the averages of $h$ over the nested half-open dyadic cubes containing
$x$, assigning the value zero if this limsup is not finite. For members of
$\F_s(M)$, clip the resulting value to $[0,1]$. The resulting map is jointly
measurable in $(h,x)$ and agrees with $h$ almost everywhere. Since every $g\in\G(c,C)$ is absolutely continuous with respect
to Lebesgue measure, this convention does not alter the statistical law.
Confidence sets are understood as Effros-measurable random closed subsets of
$L^2(dx)$, meaning that for every open set $O$, the event
$\{C_n\cap O\neq\varnothing\}$ is measurable.  This entails no loss in the
lower bounds, since taking closures preserves diameter and can only improve coverage.

\subsection{Acknowledgments}
P.L. was partially supported by NSF grant DMS-2450004. This paper was written by the authors with the assistance of large language models, which included suggesting arguments, contributing to drafting and revision, and performing exploratory computational checks.

\section{Construction and proof of the main theorem}\label{sec:main-proof}

The construction rests on two ingredients: a rate-adaptive pilot estimator and a
residual cross-product bound.  We state both ingredients, construct the
confidence sets, and prove \cref{thm:main}; their corresponding proofs are postponed to
\cref{sec:pilot-proof,sec:bound-proof}.

We use the vector $\Phi_J$ of level-$J$ scaling functions, the associated projection kernel $K_J$, and the dimension $D_J= 2^{Jd}$
defined in  \eqref{eq:app-dimension} and \eqref{eq:app-Phi-kernel}.  The kernel
bounds and $L^\infty$ stability needed below are
\eqref{eq:app-kernel-bounds} and \eqref{eq:app-Linfty-stability}.

Split the observations into three disjoint blocks $I_0,I_1,I_2$ of size
$m=\lfloor n/3\rfloor$, and split $I_2$ into two blocks $A,B$ of common size
$\ell=\lfloor m/2\rfloor$.  Any remaining observations are discarded.  The
blocks $I_0,I_1$ construct the pilot; all residual cross-products use $A,B$.
For the finitely many $n$ for which the constructions below are unavailable,
we set $C_n=L^2(dx)$, which has no effect on the asymptotic
statements.

\subsection{Pilot construction and risk bound}

For each resolution $J$, we fit a least-squares wavelet series estimator on
$I_0$, set ill-conditioned fits to the zero function, clip the result to the Bernoulli
parameter space, and select among the surviving candidates by independent
validation on $I_1$.

Let $J_{\max}$ be the largest integer $J\geq J_0$ such that
\begin{equation}\label{eq:Jmax}
 D_J\leq m^{d/(2\beta_-+d)},
\end{equation}
and let $\mathcal J_m=\{J_0,\ldots,J_{\max}\}$.  For every $J\in\mathcal J_m$, define
on $I_0$
\begin{equation}\label{eq:empirical-gram}
 \widehat G_J=\frac1m\sum_{i\in I_0}
 \Phi_J(X_i)\Phi_J(X_i)^\top,
 \qquad
 \widehat b_J=\frac1m\sum_{i\in I_0}Y_i\Phi_J(X_i),
\end{equation}
and set
\begin{equation}\label{eq:pilot-candidates}
 \widehat\theta_J=
 \begin{cases}
  \widehat G_J^{-1}\widehat b_J,
  &\lambda_{\min}(\widehat G_J)\geq c/2,\\
  0,&\lambda_{\min}(\widehat G_J)<c/2,
 \end{cases}
 \qquad
 \widehat f_J=\clip\bigl(\Phi_J^\top\widehat\theta_J\bigr).
\end{equation}
On the independent block $I_1$, define
\begin{equation}\label{eq:validation-loss}
 \widehat L(J)=\frac1m\sum_{i\in I_1}
 \{Y_i-\widehat f_J(X_i)\}^2.
\end{equation}
Let $\widehat J$ be the smallest minimizer of $\widehat L(J)$ over
$\mathcal J_m$, and put
\begin{equation}\label{eq:pilot-definition}
 \widetilde f=\widehat f_{\widehat J}.
\end{equation}
The proof of the following proposition is given in \Cref{sec:pilot-proof}. 
\begin{proposition}\label{prop:pilot}
The estimator $\widetilde f$ is measurable, takes values in $[0,1]$, and
satisfies, uniformly for all $\beta\in[\beta_-,\beta_+]$, 
\begin{equation}\label{eq:pilot-risk}
 \sup_{\substack{f\in\F_\beta(M)\\g\in\G(c,C)}}
 \Ep_{f,g}\normtwo{\widetilde f-f}^2
 \lesssim n^{-2\beta/(2\beta+d)}.
\end{equation}
\end{proposition}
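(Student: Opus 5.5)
We prove \cref{prop:pilot} in three steps: measurability and range, a risk bound for each fixed-$J$ candidate, and an oracle inequality for the validation step.

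\emph{Measurability and range.} Each $\widehat f_J$ is a Borel function of the sample on $I_0$ and of $x$. The eigenvalue threshold defines a measurable event, and matrix inversion is continuous on that event. The selector $\widehat J$ is a measurable argmin over a finite set. Clipping gives $\widetilde f\in[0,1]$.

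\emph{Risk of each candidate.} We bound $\Ep\normtwo{\widehat f_J-f}^2$ for each fixed $J\in\mathcal J_m$.

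\begin{enumerate}[label=\textup{(\roman*)}]
\item \emph{Gram matrix.} The population Gram matrix $G_J=\Ep\Phi_J\Phi_J^\top$ satisfies $c I\preceq G_J\preceq CI$. This is exactly the coercivity identity $\theta^\top G_J\theta=\int g v^2$ for $v=\Phi_J^\top\theta$.
\item \emph{Concentration.} Matrix Bernstein, with $\norm{\Phi_J(x)}_{\ell^2}^2\lesssim D_J$ from \eqref{eq:app-kernel-bounds}, gives
\[
\Pp\bigl(\norm{\widehat G_J-G_J}_{op}>c/2\bigr)\le 2D_J\exp\bigl(-c_0 m/D_J\bigr).
\]
Since $D_J\le m^{d/(2\beta_-+d)}=o(m)$, this is $O(m^{-10})$ uniformly over $J\in\mathcal J_m$. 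On the bad event the clipped loss is at most $1$, so its contribution to the risk is negligible.
\item \emph{Good event.} On the good event, compare with the $L^2(g)$-projection $f_J^g=\Phi_J^\top G_J^{-1}b_J$, where $b_J=\Ep Y\Phi_J$. By coercivity,
\[
\normtwo{f-f_J^g}\le \sqrt{C/c}\,\normtwo{(I-P_J)f}\lesssim 2^{-J\beta}.
\]
It then suffices to bound $\Ep\norm{\widehat\theta_J-\theta_J}^2$, where $\theta_J=G_J^{-1}b_J$. Write
\[
\widehat\theta_J-\theta_J=\widehat G_J^{-1}\bigl(\widehat b_J-\widehat G_J\theta_J\bigr).
\]
Here $\widehat b_J-\widehat G_J\theta_J$ is a centered mean of $(Y_i-f_J^g(X_i))\Phi_J(X_i)$, using the normal equations. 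Its expected squared norm is
\[
m^{-1}\Ep\bigl[(Y-f_J^g(X))^2\norm{\Phi_J(X)}^2\bigr]\lesssim \frac{D_J}{m}\bigl(1+\norm{f_J^g}_\infty^2\bigr).
\]
The factor $\norm{f_J^g}_\infty\lesssim1$ by the $L^\infty$ stability \eqref{eq:app-Linfty-stability}, applied to the $g$-weighted projection of a bounded function. Since $\norm{\widehat G_J^{-1}}\le 2/c$ on this event, we obtain
\[
\Ep\normtwo{\widehat f_J-f}^2\lesssim D_J/m+2^{-2J\beta}+m^{-10},
\]
and clipping only decreases the error because $f\in[0,1]$.
\end{enumerate}

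\textbf{Main obstacle.} The delicate point is the $L^\infty$ bound on the $L^2(g)$-projection when $g$ is rough. I expect to get it by writing the Gram inverse $G_J^{-1}$ as a banded-decay matrix, Demko-type: $G_J$ is banded because the scaling functions are compactly supported, and it is well-conditioned with condition number at most $C/c$. Exponential off-diagonal decay of its inverse then gives uniform boundedness of $f_J^g$ in terms of $\norm f_\infty\le1$. Should this fail, I would use a cruder bound $\norm{f_J^g}_\infty\lesssim\sqrt{D_J}$ together with the variance $\operatorname{Var}(Y\mid X)\le1/4$ and a separate bias term.

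\emph{Validation.} Conditionally on $I_0$, the candidates are fixed functions with values in $[0,1]$, and
\[
\Ep[(Y-h(X))^2]-\Ep[(Y-f(X))^2]=\norm{h-f}_{L^2(g)}^2,
\]
which lies between $c\normtwo{h-f}^2$ and $C\normtwo{h-f}^2$. A standard hold-out oracle inequality for bounded losses over $|\mathcal J_m|\lesssim\log m$ candidates then gives
\[
\Ep\normtwo{\widetilde f-f}^2\lesssim \min_{J\in\mathcal J_m}\Ep\normtwo{\widehat f_J-f}^2+\frac{\log\log m}{m}.
\]
This follows from Bernstein's inequality applied to the centered excess losses, whose variance is bounded by a constant times their mean. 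Choosing $2^{Jd}\asymp m^{d/(2\beta+d)}$, which lies in $\mathcal J_m$ because $\beta\ge\beta_-$, balances the two terms. This yields the rate $n^{-2\beta/(2\beta+d)}$ uniformly in $\beta$, with constants uniform via \eqref{eq:approximation}.
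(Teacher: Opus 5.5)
Your plan follows the paper's proof. It uses Gram-matrix concentration with $cI\preceq G_J\preceq CI$, the normal-equation expansion of $\widehat\theta_J-\theta_J$ around the $L^2(g)$ projection $f_J^g$, clipping, and a Bernstein hold-out oracle inequality over $O(\log m)$ candidates. Working in $L^2(dx)$ rather than $L^2(g)$ at the candidate level, and your $\log\log m/m$ remainder (obtained by integrating the tail), are immaterial differences.

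The one step that does not hold as written is $\norm{f_J^g}_\infty\lesssim1$ via \eqref{eq:app-Linfty-stability}. That bound concerns the unweighted projection $P_J$, whose kernel satisfies $\sup_x\int|K_J(x,z)|\,dz\le A_\psi$. The $L^2(g)$ projection has kernel $\Phi_J(x)^\top G_J^{-1}\Phi_J(z)g(z)$, and \eqref{eq:app-Linfty-stability} says nothing about it.

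Your Demko-type repair can be made rigorous:
\begin{enumerate}[label=\textup{(\alph*)}]
\item entries of $G_J$ vanish unless supports overlap, and the spectrum lies in $[c,C]$;
\item polynomial approximation of $1/x$ on $[c,C]$ gives exponential decay of $G_J^{-1}$ in the overlap-graph distance;
\item the localization bounds give polynomial growth of graph balls, so $\norm{G_J^{-1}}_{\ell^\infty\to\ell^\infty}\lesssim1$;
\item together with $|\int f\varphi_{J,\lambda}g|\lesssim2^{-Jd/2}$, this gives $\norm{f_J^g}_\infty\lesssim1$.
\end{enumerate}

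The obstacle is self-inflicted, though: you took suprema of both factors in $\Ep[\norm{\Phi_J(X)}^2\{Y-f_J^g(X)\}^2]$. Put the supremum only on the kernel:
\[
\Ep\bigl[\norm{\Phi_J(X)}_{\ell^2}^2\{Y-f_J^g(X)\}^2\bigr]\le A_\psi D_J\,\Ep\{Y-f_J^g(X)\}^2\le A_\psi D_J\Bigl(\tfrac14+\normg{f}^2\Bigr).
\]
Here you use $\Ep\{Y-f_J^g(X)\}^2=\Ep\{Y-f(X)\}^2+\normg{f-f_J^g}^2$ and $\normg{f-f_J^g}\le\normg{f}$, since $0\in V_J$. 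This is exactly the paper's argument, and it needs nothing about $g$ beyond $g\le C$.

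Your fallback reduces to the same argument only if the bias piece $(f-f_J^g)(X)$ is controlled in $L^2(g)$ against $\sup_xK_J(x,x)$. If you instead insert $\norm{f_J^g}_\infty\lesssim\sqrt{D_J}$ pointwise, you lose a factor $D_J$ and get $D_J^2/m$, which is too large at the optimal resolution.
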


\subsection{The residual bound: statements}

Let $a$ be a bounded function measurable with respect to the pilot blocks, $I_0$ and $I_1$, and
put
\begin{equation}\label{eq:q-definition}
 r=f-a,
 \qquad
 q_J(r;g)=\normtwo{P_J(rg)}^2.
\end{equation}
The following deterministic lemma converts the
 quantity $q_J(r;g)$ targeted by the cross-product statistic into control of the unweighted error
$\normtwo r^2$. We prove it in \Cref{sec:bound-proof}.

\begin{lemma}\label{lem:coercivity}
For every $r\in L^2(dx)$ and $g\in\G(c,C)$,
\begin{align}
 q_J(r;g)^{1/2}
 &\leq C\normtwo r,
 \label{eq:q-upper-deterministic}\\
 q_J(r;g)^{1/2}
 &\geq c\normtwo{P_Jr}-C\normtwo{(I-P_J)r},
 \label{eq:coercive-one}\\
 q_J(r;g)^{1/2}
 &\geq c\normtwo r-(c+C)\normtwo{(I-P_J)r},
 \label{eq:coercive-total}\\
 \normtwo r^2
 &\leq\frac{2}{c^2}q_J(r;g)
 +\left(1+\frac{2C^2}{c^2}\right)
 \normtwo{(I-P_J)r}^2.
 \label{eq:coercive-square}
\end{align}
\end{lemma}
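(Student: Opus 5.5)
All four inequalities follow from three elementary facts: $P_J$ is an orthogonal projection, hence a contraction on $L^2(dx)$; multiplication $M_g$ has operator norm at most $C$; and the coercivity identity \eqref{eq:intro-coercivity}. Write $v=P_Jr\in V_J$ and $w=(I-P_J)r$, so that $r=v+w$ with $v\perp w$. The plan is to prove \eqref{eq:q-upper-deterministic}, then \eqref{eq:coercive-one}, and to derive the remaining two from these by the triangle inequality and an elementary squaring step.

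\emph{Upper bound.} Since $|g|\le C$ a.e., we have $\normtwo{rg}\le C\normtwo r$. Contractivity of $P_J$ then gives $q_J(r;g)^{1/2}=\normtwo{P_J(rg)}\le C\normtwo r$, which is \eqref{eq:q-upper-deterministic}.

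\emph{Coercive bound \eqref{eq:coercive-one}.} Linearity gives $P_J(rg)=P_J(vg)+P_J(wg)$. The second term satisfies $\normtwo{P_J(wg)}\le C\normtwo w$ by the same argument as above. For the first term we use that $v\in V_J$ and $P_J$ is self-adjoint:
\[
 \ip{P_J(gv)}{v}=\ip{gv}{P_Jv}=\int_\X g v^2\,dx\ge c\normtwo v^2 .
\]
Cauchy--Schwarz then yields $\normtwo{P_J(gv)}\,\normtwo v\ge c\normtwo v^2$, so $\normtwo{P_J(gv)}\ge c\normtwo v$; if $v=0$ this is trivial. The reverse triangle inequality gives $q_J^{1/2}\ge c\normtwo{v}-C\normtwo{w}$.

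\emph{Derived forms.} For \eqref{eq:coercive-total}, insert $\normtwo v\ge\normtwo r-\normtwo w$ into \eqref{eq:coercive-one}. For \eqref{eq:coercive-square}, rearrange \eqref{eq:coercive-one} as $c\normtwo v\le q_J^{1/2}+C\normtwo w$ and square using $(x+y)^2\le2x^2+2y^2$. This gives $\normtwo v^2\le \frac{2}{c^2}q_J+\frac{2C^2}{c^2}\normtwo w^2$. Pythagoras, $\normtwo r^2=\normtwo v^2+\normtwo w^2$, then produces exactly the constant $1+2C^2/c^2$.

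There is no real obstacle here. The only point needing care is the identity $\ip{P_J(gv)}{v}=\int gv^2$, which requires self-adjointness of $P_J$ and $v\in V_J$. It is this identity that lets us avoid any regularity assumption on $g$, since only $c\le g\le C$ is used.
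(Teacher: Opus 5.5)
Your proof is correct and follows the paper's argument essentially step for step. It uses the same splitting $v=P_Jr$, $w=(I-P_J)r$, the contraction bound for \eqref{eq:q-upper-deterministic}, and the identity $\ip{P_J(gv)}{v}=\int_\X gv^2\,dx$ combined with Cauchy--Schwarz and the reverse triangle inequality, followed by the same squaring-plus-Pythagoras step that gives \eqref{eq:coercive-square}.
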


For $H\in\{A,B\}$, define
\begin{equation}\label{eq:residual-average}
 S_{H,J}(a)=\frac1\ell\sum_{i\in H}
 \{Y_i-a(X_i)\}\Phi_J(X_i)
\end{equation}
and the decoupled cross-product
\begin{equation}\label{eq:cross-product}
 T_J(a)=\ip{S_{A,J}(a)}{S_{B,J}(a)}.
\end{equation}
The next lemma is also proved in \Cref{sec:bound-proof}. 
\begin{lemma}\label{lem:cross-product-bound}
Fix a regression function $f$ with $0\leq f\leq1$, a design density
$g\in\G(c,C)$, and a resolution level $J\geq J_0$.  Let $\mathcal H$ be a
$\sigma$-field independent of the observations indexed by $A\cup B$, and let
$a$ be an $\mathcal H$-measurable function satisfying
$\norm a_\infty\leq A_0$ almost surely.  Set $r=f-a$.  Then, almost surely,
\begin{equation}\label{eq:T-moments}
 \Ep(T_J(a)\mid\mathcal H)=q_J(r;g),
 \qquad
 \operatorname{Var}(T_J(a)\mid\mathcal H)
 \leq V\left(\frac{q_J(r;g)}\ell+\frac{D_J}{\ell^2}\right),
\end{equation}
where $V$ depends only on $A_0$ and $C$.

Consequently, for every $0<\zeta<1$ there exists
$\lambda_\zeta<\infty$, depending only on $\zeta$, $A_0$, and $C$, such
that, with
\begin{equation}\label{eq:qbar}
 t_J=\lambda_\zeta\frac{\sqrt{D_J}}\ell,
 \qquad
 \overline q_J(a)=2(T_J(a)+t_J)_+,
\end{equation}
one has
\begin{equation}\label{eq:bound-event}
 \Pp\left(
 q_J(r;g)\leq\overline q_J(a)
 \leq3q_J(r;g)+4t_J
 \ \middle|\ \mathcal H
 \right)\geq1-\zeta
 \quad\text{almost surely}.
\end{equation}
The constants are uniform over all admissible choices of
$f$, $g$, $\mathcal H$, $a$, and $J$ satisfying the displayed conditions.
\end{lemma}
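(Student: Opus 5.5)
Conditionally on $\mathcal H$, the center $a$ is a fixed bounded function and the pairs in $A\cup B$ are i.i.d. Set
\[
Z_i=\{Y_i-a(X_i)\}\Phi_J(X_i)\in\R^{D_J}.
\]
Then $S_{A,J}$ and $S_{B,J}$ are independent averages of $\ell$ i.i.d. copies of $Z$. The regression identity $\Ep(Y\mid X)=f(X)$ gives
\[
\Ep(Z\mid\mathcal H)=\mu_J(a)=\bigl(\ip{rg}{\varphi_{J,\lambda}}\bigr)_{\lambda\in\Lambda_J}.
\]
Since the $\varphi_{J,\lambda}$ form an orthonormal basis of $V_J$, we get $\norm{\mu}_{\ell^2}^2=\normtwo{P_J(rg)}^2=q_J$. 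Independence of the two blocks then gives
\[
\Ep(T_J\mid\mathcal H)=\ip{\mu}{\mu}=q_J,
\]
which is the mean identity.

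For the variance, write $S_A=\mu+U$ and $S_B=\mu+W$, where $U$ and $W$ are independent, centered, and have covariance $\Sigma/\ell$ with $\Sigma=\operatorname{Cov}(Z\mid\mathcal H)$. Then
\[
T_J-q_J=\ip{\mu}{U}+\ip{\mu}{W}+\ip{U}{W},
\]
and the three terms are pairwise uncorrelated. This yields
\[
\operatorname{Var}(T_J\mid\mathcal H)=\frac{2\mu^\top\Sigma\mu}{\ell}+\frac{\operatorname{tr}(\Sigma^2)}{\ell^2}.
\]

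The first term is the main point. Bound $\mu^\top\Sigma\mu\le \Ep\{(Y-a(X))^2(\mu^\top\Phi_J(X))^2\}$. Since $|Y-a(X)|\le 1+A_0$ and $g\le C$, this is at most
\[
(1+A_0)^2C\,\normtwo{\mu^\top\Phi_J}^2=(1+A_0)^2C\,q_J,
\]
using orthonormality of the scaling basis.

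For the trace, $\operatorname{tr}(\Sigma^2)=\norm{\Sigma}_{HS}^2\le\norm{\Sigma+\mu\mu^\top}_{HS}^2$, since both matrices are positive semidefinite. Here $\Sigma+\mu\mu^\top=\Ep ZZ^\top$, and
\[
\norm{\Ep ZZ^\top}_{HS}^2=\Ep\bigl[(Y-a(X))^2(Y'-a(X'))^2K_J(X,X')^2\bigr]
\]
over two independent copies. This is at most $(1+A_0)^4C^2\iint K_J(x,y)^2\,dx\,dy=(1+A_0)^4C^2D_J$. Together these give \eqref{eq:T-moments} with $V=2(1+A_0)^4\max(C,C^2)$, or similar, using only the kernel identity $\int\!\!\int K_J^2=D_J$ from the appendix.

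The tail statement \eqref{eq:bound-event} follows from Chebyshev's inequality. We have $|T_J-q_J|\le\sqrt{\operatorname{Var}/\zeta}$ with conditional probability at least $1-\zeta$. On that event,
\[
|T_J-q_J|\le\sqrt{Vq_J/(\zeta\ell)}+\sqrt{VD_J/\zeta}\,/\ell\le \tfrac{q_J}{2}+\frac{V}{2\zeta\ell}+\sqrt{V/\zeta}\,\frac{\sqrt{D_J}}{\ell},
\]
by AM--GM. Since $D_J\ge1$, we have $1/\ell\le\sqrt{D_J}/\ell$, so choosing $\lambda_\zeta=V/(2\zeta)+\sqrt{V/\zeta}$ gives $|T_J-q_J|\le q_J/2+t_J$.

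This single two-sided bound gives both inequalities. First, $T_J+t_J\ge q_J/2$, so $\overline q_J=2(T_J+t_J)_+\ge q_J$. Second, $T_J+t_J\le \tfrac32 q_J+2t_J$, so $\overline q_J\le3q_J+4t_J$.

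The only step requiring care is the cross term $\mu^\top\Sigma\mu\lesssim q_J$. It has to be linear in $q_J$, not merely bounded by $D_J$, and this is what the orthonormality together with $g\le C$ delivers. All constants depend only on $A_0$, $C$ and $\zeta$, so they are uniform over $f$, $g$, $a$, $J$ and $\mathcal H$.
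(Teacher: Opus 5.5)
Your proof is correct and is essentially the paper's argument: you use the same decomposition $S_{H,J}(a)=\mu+e_H$, the same exact variance formula $\frac{2}{\ell}\mu^\top\Sigma\mu+\frac{1}{\ell^2}\operatorname{tr}(\Sigma^2)$, the same bound $\mu^\top\Sigma\mu\leq(1+A_0)^2Cq_J$, and a Chebyshev step. The differences are cosmetic. The paper bounds $\norm{\Sigma}_{\mathrm{op}}\leq(1+A_0)^2C$ once and then uses $\operatorname{tr}(\Sigma^2)\leq D_J\norm{\Sigma}_{\mathrm{op}}^2$, where you use a Hilbert--Schmidt/kernel computation; yours is also valid, since $\iint K_J^2=D_J$ follows directly from orthonormality. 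The paper also applies Chebyshev directly at the deviation level $q_J/2+t_J$ rather than going through AM--GM.
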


\subsection{A confidence ball for one factor-of-two smoothness range}

For $s\in[\beta_-,\beta_+]$, choose an integer $J_n(s)\geq J_0$ such that
\begin{equation}\label{eq:Jns}
 \left|J_n(s)-\frac{2}{4s+d}\log_2 n\right|\leq1.
\end{equation}
Since $\ell\asymp n$ and $D_J=2^{Jd}$ by
\eqref{eq:app-dimension}, uniformly for $s$ in
the fixed compact interval $[\beta_-,\beta_+]$,
\begin{equation}\label{eq:balancing}
 2^{-2J_n(s)s}
 \asymp
 \frac{\sqrt{D_{J_n(s)}}}{\ell}
 \asymp
 \rho_n(s)^2.
\end{equation}
Fix $0<\zeta<1$.  In every use of \eqref{eq:qbar}, take the constant
$\lambda_\zeta$ from \cref{lem:cross-product-bound} with
$A_0=\max\{A_\psi,1\}$, where $A_\psi$ is given by \cref{prop:app-wavelet-estimates}.

For a fixed $s$, abbreviate $J=J_n(s)$ and define
\begin{equation}\label{eq:block-center}
 a_s=P_J\widetilde f.
\end{equation}
By \eqref{eq:app-Linfty-stability},
$\norm{a_s}_\infty\leq A_\psi$.  Put
\begin{align}
 R_{n,s}^2
 &=\frac{2}{c^2}\overline q_J(a_s)
 +\left(1+\frac{2C^2}{c^2}\right)
 A_B^2M^22^{-2Js},
 \label{eq:block-radius}\\
 C_{n,s}
 &=\left\{h\in L^2(dx):\normtwo{h-a_s}\leq R_{n,s}\right\}.
 \label{eq:block-ball}
\end{align}

\begin{proposition}\label{prop:one-block}
For every fixed $s\in[\beta_-,\beta_+]$,
\begin{equation}\label{eq:one-block-coverage}
 \inf_{\substack{f\in\F_s(M)\\g\in\G(c,C)}}
 \Pp_{f,g}(f\in C_{n,s})\geq1-\zeta
\end{equation}
for all sufficiently large $n$.  Moreover, for every $\eta>0$, there is a constant
$K=K(\eta,s)<\infty$ such that
\begin{equation}\label{eq:one-block-diameter}
 \limsup_{n\to\infty}
 \sup_{\substack{\beta\in[s,\min\{2s,\beta_+\}]\\
 f\in\F_\beta(M),\ g\in\G(c,C)}}
 \Pp_{f,g}\left\{\diam(C_{n,s})>K\eps_n(\beta)\right\}
 \leq\zeta+\eta.
\end{equation}
For any fixed finite collection of values of $s$, the constant $K$ may be
chosen uniformly by taking the maximum of the corresponding constants.
\end{proposition}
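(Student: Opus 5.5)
Coverage follows by conditioning on the pilot blocks and combining \cref{lem:cross-product-bound} with the deterministic inequality \eqref{eq:coercive-square}. The diameter bound follows from the upper half of the same event together with \cref{prop:pilot}.

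\emph{Coverage.} Fix $s$, $f\in\F_s(M)$, $g\in\G(c,C)$, write $J=J_n(s)$, and let $\mathcal H$ be the $\sigma$-field generated by $I_0\cup I_1$. The center $a_s=P_J\widetilde f$ is $\mathcal H$-measurable, and by \eqref{eq:app-Linfty-stability} it satisfies $\norm{a_s}_\infty\leq A_\psi\leq A_0$. Hence \cref{lem:cross-product-bound} gives, with conditional probability at least $1-\zeta$, the inequality $q_J(r;g)\leq\overline q_J(a_s)$ for $r=f-a_s$. Since $a_s\in V_J$, we have $(I-P_J)r=(I-P_J)f$, and \eqref{eq:approximation} bounds this by $A_BM2^{-Js}$. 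Inserting both bounds into \eqref{eq:coercive-square} gives $\normtwo{f-a_s}^2\leq R_{n,s}^2$ on this event. Taking expectations over $\mathcal H$ yields \eqref{eq:one-block-coverage}. This argument in fact holds for every $n$ with $J_n(s)\geq J_0$, and the lemma's constants are uniform in $f,g$.

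\emph{Diameter.} Since $\diam(C_{n,s})=2R_{n,s}$, it suffices to bound $R_{n,s}$. On the same event, $\overline q_J(a_s)\leq 3q_J(r;g)+4t_J$. By \eqref{eq:q-upper-deterministic}, $q_J(r;g)\leq C^2\normtwo{f-a_s}^2$. Since $f-a_s=P_J(f-\widetilde f)+(I-P_J)f$, we get
\[
\normtwo{f-a_s}^2\leq 2\normtwo{f-\widetilde f}^2+2A_B^2M^22^{-2J\beta},
\]
now using $f\in\F_\beta(M)$ with $\beta\geq s$. By \eqref{eq:balancing}, both $t_J$ and $2^{-2Js}$ are $\asymp\rho_n(s)^2$. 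Because $\beta\leq 2s$, the identity \eqref{eq:rate-identity} and monotonicity of $\eps_n$ give $\rho_n(s)^2=\eps_n(2s)^2\leq\eps_n(\beta)^2$. The remaining approximation term satisfies $2^{-2J\beta}\leq 2^{-2Js}\lesssim\eps_n(\beta)^2$.

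For the pilot term, \cref{prop:pilot} and Markov's inequality give, uniformly over $\beta\in[s,\min\{2s,\beta_+\}]$, $f\in\F_\beta(M)$ and $g\in\G(c,C)$,
\[
\Pp\{\normtwo{\widetilde f-f}^2>K_1\eps_n(\beta)^2\}\leq\eta,
\]
where $K_1=K_1(\eta)$. Here I use $n^{-2\beta/(2\beta+d)}=\eps_n(\beta)^2$. A union bound with the complement of the conditional event, which has probability at most $\zeta$, then gives $R_{n,s}^2\leq K^2\eps_n(\beta)^2/4$ outside a set of probability at most $\zeta+\eta$. This proves \eqref{eq:one-block-diameter}. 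Uniformity over finitely many $s$ follows by taking the maximum of the constants.

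\emph{Main obstacle.} The only delicate point is keeping every constant uniform in $\beta$ over the compact range. This is provided by the uniform-in-$s$ statements \eqref{eq:approximation}, \eqref{eq:balancing} and \eqref{eq:pilot-risk}. A further check is that the upper inequality in \eqref{eq:bound-event} holds on the same event as the coverage inequality, so a single $\zeta$ suffices for both parts.
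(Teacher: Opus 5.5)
Your proposal is correct and follows the paper's proof essentially step for step. A single conditional event from \cref{lem:cross-product-bound} gives coverage via \eqref{eq:coercive-square} and the tail identity $(I-P_J)r=(I-P_J)f$; the same event bounds the radius via \eqref{eq:q-upper-deterministic}, \eqref{eq:balancing}, $\rho_n(s)\leq\eps_n(\beta)$ for $\beta\leq2s$, and Markov's inequality on the pilot risk. The only cosmetic difference is that you bound $\normtwo{f-a_s}^2$ with $(x+y)^2\leq2x^2+2y^2$, whereas the paper uses the exact orthogonality of $(I-P_J)f$ and $P_J(f-\widetilde f)$; this changes only the constants.
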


\begin{proof}[Proof of \Cref{prop:one-block}]
Let $r=f-a_s$.  Since $a_s\in V_J$,
\begin{equation}\label{eq:block-tail-identity}
 (I-P_J)r=(I-P_J)f.
\end{equation}
If $f\in\F_s(M)$, then \eqref{eq:approximation} gives
\begin{equation}\label{e:newtail}
 \normtwo{(I-P_J)r}^2
 \leq A_B^2M^22^{-2Js}.
\end{equation}
Conditional on $a_s$, \eqref{eq:bound-event}, applied with
$a=a_s$ and $r=f-a_s$, has probability at least $1-\zeta$.  On this event,
\[
 q_J(r;g)\leq\overline q_J(a_s).
\]
Applying \eqref{eq:coercive-square} and the tail bound \eqref{e:newtail}, we obtain
\begin{align*}
 \normtwo{f-a_s}^2
 &=\normtwo r^2\\
 &\leq
 \frac{2}{c^2}q_J(r;g)
 +\left(1+\frac{2C^2}{c^2}\right)
 \normtwo{(I-P_J)r}^2\\
 &\leq
 \frac{2}{c^2}\overline q_J(a_s)
 +\left(1+\frac{2C^2}{c^2}\right)
 A_B^2M^22^{-2Js}\\
 &=R_{n,s}^2.
\end{align*}
Hence $f\in C_{n,s}$ on the event in \eqref{eq:bound-event}.  Taking
expectations of its conditional probability given $a_s$ yields
\[
 \Pp_{f,g}(f\in C_{n,s})\geq1-\zeta,
\]
uniformly over $f\in\F_s(M)$ and $g\in\G(c,C)$.  This proves
\eqref{eq:one-block-coverage}.

Now suppose that $f\in\F_\beta(M)$ with
$\beta\in[s,\min\{2s,\beta_+\}]$, and let
\[
 \mathcal E_{n,s}
 =
 \left\{
 q_J(f-a_s;g)
 \leq\overline q_J(a_s)
 \leq3q_J(f-a_s;g)+4t_J
 \right\}.
\]

Conditional on $a_s$, \eqref{eq:bound-event} gives
$\Pp(\mathcal E_{n,s}\mid a_s)\geq1-\zeta$.  On $\mathcal E_{n,s}$, the
upper inequality in its definition, \eqref{eq:q-upper-deterministic}, and
\eqref{eq:balancing} give
\begin{align}
 R_{n,s}^2
 &\lesssim
 q_J(f-a_s;g)+t_J+2^{-2Js}
 \notag\\
 &\lesssim
 \normtwo{f-a_s}^2+\rho_n(s)^2
 =
 \normtwo{f-P_J\widetilde f}^2+\rho_n(s)^2.
 \label{eq:block-radius-bound-1}
\end{align}
Moreover,
\[
 f-P_J\widetilde f
 =
 (I-P_J)f+P_J(f-\widetilde f),
\]
and the two terms on the right are orthogonal.  Hence
\begin{align}
 \normtwo{f-P_J\widetilde f}^2
 &=\normtwo{(I-P_J)f}^2
   +\normtwo{P_J(f-\widetilde f)}^2
 \notag\\
 &\leq A_B^2M^22^{-2J\beta}+\normtwo{f-\widetilde f}^2.
 \label{eq:block-orthogonal-bound}
\end{align}
Since $\beta\geq s$,
\[
 2^{-2J\beta}\leq2^{-2Js}\lesssim\rho_n(s)^2,
\]
uniformly over $\beta\in[s,\min\{2s,\beta_+\}]$.  Also,
\begin{equation}\label{eq:rate-comparison}
 \rho_n(s)^2\leq\eps_n(\beta)^2
 \quad\Longleftrightarrow\quad
 \beta\leq2s.
\end{equation}
Consequently, on $\mathcal E_{n,s}$,
\[
 R_{n,s}^2
 \leq C_0\left(
 \normtwo{f-\widetilde f}^2+\eps_n(\beta)^2
 \right),
\]
where $C_0$ is independent of $\beta$ in the displayed range.  Since
$\diam(C_{n,s})=2R_{n,s}$, the uniform pilot bound in
\cref{prop:pilot} and Markov's inequality imply that one may choose
$K=K(\eta,s)$, independent of $\beta$, such that
\[
 \limsup_{n\to\infty}
 \sup_{\substack{\beta\in[s,\min\{2s,\beta_+\}]\\
 f\in\F_\beta(M),\ g\in\G(c,C)}}
 \Pp_{f,g}\left\{
 \diam(C_{n,s})>K\eps_n(\beta),\ \mathcal E_{n,s}
 \right\}
 \leq\eta.
\]
Finally, averaging
$\Pp_{f,g}(\mathcal E_{n,s}^c\mid a_s)\leq\zeta$ gives
$\Pp_{f,g}(\mathcal E_{n,s}^c)\leq\zeta$ uniformly over the same parameter
range.  The union bound therefore proves
\eqref{eq:one-block-diameter}.
\end{proof}

\subsection{Tests between adjacent smoothness classes}

For $s$ such that $2s\leq\beta_+$, the set $\F_{2s}(M)$ is nonempty, closed,
and convex in $L^2(dx)$.  Indeed, it contains zero; convexity is immediate;
the coefficient inequalities defining the Besov ball pass to $L^2$ limits,
as do the constraints $0\leq f\leq1$ after taking a subsequence converging
almost everywhere.  Let
\begin{equation}\label{eq:metric-projection}
 \widehat h_s=\operatorname*{argmin}_{h\in\F_{2s}(M)}
 \normtwo{\widetilde f-h}
\end{equation}
be the Hilbert-space metric projection.  It is unique and is a measurable
function of $\widetilde f$ because projection onto a closed convex subset of a
separable Hilbert space is continuous.  Its chosen representative takes
values in $[0,1]$.

Choose $n_{\mathrm{test}}<\infty$, depending only on the fixed smoothness
range, $d$, and $J_0$, so that $J_n(s_j)$ is defined for every $j=1,\ldots,N$ and
every $n\geq n_{\mathrm{test}}$. Let $\tau>0$ be a threshold parameter, to
be chosen below.  For $n\geq n_{\mathrm{test}}$ and $j<N$, define
\begin{equation}\label{eq:adjacent-test}
 \phi_j=
 \1\left\{
 \overline q_{J_n(s_j)}(\widehat h_{s_j})
 >\tau\rho_n(s_j)^2
 \right\}.
\end{equation}
Here $\phi_j=1$ means that the smoother null
$f\in\F_{2s_j}(M)$ is rejected.  For $n<n_{\mathrm{test}}$, we use the
standing convention that $C_n=L^2(dx)$, so the tests need not be defined.

\begin{lemma}\label{lem:adjacent-tests}
Fix $0<\zeta<1$.  There exist
$n_0\geq n_{\mathrm{test}}$ and constants
$A_1,B_\zeta,B_0<\infty$, depending only on $\zeta$ and the fixed model and
wavelet parameters, such that the following bounds hold for every
$n\geq n_0$ and every $j<N$.  

If $\tau>4B_\zeta$, then
\begin{equation}\label{eq:type-one}
 \sup_{\substack{f\in\F_{2s_j}(M)\\g\in\G(c,C)}}
 \Pp_{f,g}(\phi_j=1)
 \leq
 \zeta+\frac{A_1}{\tau-4B_\zeta}.
\end{equation}
If $cL>B_0$ and $(cL-B_0)^2>\tau$, then
\begin{equation}\label{eq:type-two}
 \sup_{\substack{f\in\F_{s_j}(M),\ 
 \dist(f,\F_{2s_j}(M))\geq L\rho_n(s_j)\\
 g\in\G(c,C)}}
 \Pp_{f,g}(\phi_j=0)
 \leq\zeta.
\end{equation}
\end{lemma}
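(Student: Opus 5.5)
Write $J=J_n(s_j)$, $s=s_j$, $h=\widehat h_s$, $r=f-h$. Since $\widehat h_s$ takes values in $[0,1]$ and depends only on $I_0,I_1$, \cref{lem:cross-product-bound} applies with $\mathcal H=\sigma(I_0,I_1)$ and $A_0=1\le\max\{A_\psi,1\}$. On an event $\mathcal E$ of conditional probability at least $1-\zeta$ we therefore have
\[
 q_J(r;g)\le\overline q_J(h)\le3q_J(r;g)+4t_J .
\]
By \eqref{eq:balancing}, $t_J\le B'\rho_n(s)^2$ for a constant $B'$ depending on $\zeta$. Both error bounds reduce to controlling $q_J(r;g)$, and hence $\normtwo r$, through \cref{lem:coercivity}.

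\emph{Type one error.} Let $f\in\F_{2s}(M)$. Then $f$ lies in the closed convex set onto which $\widetilde f$ is projected. Projections onto closed convex sets are $1$-Lipschitz, so $\normtwo{f-h}=\normtwo{P f-P\widetilde f}\le\normtwo{f-\widetilde f}$, where $P$ denotes the metric projection. By \eqref{eq:q-upper-deterministic}, $q_J(r;g)\le C^2\normtwo{f-\widetilde f}^2$. Because $f\in\F_{2s}(M)$, \cref{prop:pilot} gives $\Ep\normtwo{f-\widetilde f}^2\lesssim\eps_n(2s)^2=\rho_n(s)^2$, uniformly over $g$. On $\mathcal E$, $\phi_j=1$ forces
\[
 \tau\rho_n^2<3C^2\normtwo{f-\widetilde f}^2+4B'\rho_n^2 .
\]
Set $B_\zeta=B'$. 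Markov's inequality then gives probability at most $3C^2 A'/(\tau-4B_\zeta)$, and adding $\zeta$ for $\mathcal E^c$ yields \eqref{eq:type-one} with $A_1=3C^2A'$.

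\emph{Type two error.} Let $f\in\F_s(M)$ with $\dist(f,\F_{2s}(M))\ge L\rho_n(s)$. Since $h\in\F_{2s}(M)$, we get $\normtwo r\ge L\rho_n(s)$ deterministically. By \eqref{eq:coercive-total},
\[
 q_J^{1/2}\ge c\normtwo r-(c+C)\normtwo{(I-P_J)r}.
\]
Here $(I-P_J)r=(I-P_J)f-(I-P_J)h$. For the first piece, $f\in\B_s(M)$ gives the bound $A_BM2^{-Js}$ by \eqref{eq:approximation}. For the second, $h\in\B_{2s}(M)\subseteq\B_s(M)$ by \eqref{eq:nesting}, so it is bounded by $A_BM2^{-Js}$ as well. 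By \eqref{eq:balancing} the total is at most $B''\rho_n(s)$. Hence $q_J^{1/2}\ge(cL-B_0)\rho_n(s)$ with $B_0=(c+C)B''$. On $\mathcal E$, $\overline q_J(h)\ge q_J\ge(cL-B_0)^2\rho_n^2>\tau\rho_n^2$, so $\phi_j=1$. The failure probability is therefore at most $\zeta$, which is \eqref{eq:type-two}.

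\emph{Constants and $n_0$.} The implicit constants in \eqref{eq:balancing} and \cref{prop:pilot} hold only for $n$ large, uniformly over the finitely many $s_j$; $n_0$ is chosen to make them valid. $B_0$ is deliberately independent of $\zeta$.

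The main obstacle I anticipate is in the type two bound: the tail of the projected center $h$ must be uniform, and this is exactly why $h$ is taken in the smoother class rather than as $P_J\widetilde f$. The type one bound also needs care, since it requires the contraction property together with the pilot rate at smoothness $2s$, which is valid because $2s\le\beta_+$.
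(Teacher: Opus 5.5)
Your proof is correct and follows the paper's argument step for step. The only variation is in the type-one bound: you use nonexpansiveness of the metric projection onto the closed convex set $\F_{2s_j}(M)$ to get $\normtwo{f-\widehat h_{s_j}}\le\normtwo{f-\widetilde f}$, whereas the paper uses $\normtwo{\widetilde f-\widehat h_{s_j}}\le\normtwo{\widetilde f-f}$ plus the triangle inequality and gets a factor $2$, which only changes $A_1$.

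One correction to your closing commentary, not to the proof. Tail control is not the reason for centering at $\widehat h_{s_j}$, since $P_J\widetilde f$ has no component beyond level $J$ at all. The real reason is that a center in $\F_{2s_j}(M)$ lets the separation hypothesis give $\normtwo{f-\widehat h_{s_j}}\ge L\rho_n(s_j)$ under the alternative, while under the null the center stays within $\normtwo{f-\widetilde f}$ of $f$.
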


\begin{proof}[Proof of \Cref{lem:adjacent-tests}]
Write $s=s_j$, $J=J_n(s)$, $\rho=\rho_n(s)$, and
\begin{equation}\label{eq:test-q}
 q=q_J(f-\widehat h_s;g).
\end{equation}
After increasing $n_0$ if necessary, \eqref{eq:balancing} gives
\[
 t_J\leq B_\zeta\rho^2
\]
for every $n\geq n_0$ and every $j<N$.  Let
\[
 \mathcal E_j
 =
 \left\{
 q\leq\overline q_J(\widehat h_s)
 \leq3q+4t_J
 \right\}.
\]
Conditional on $\widehat h_s$, \eqref{eq:bound-event}, applied with
$a=\widehat h_s$ and $r=f-\widehat h_s$, gives
\[
 \Pp_{f,g}(\mathcal E_j\mid\widehat h_s)\geq1-\zeta.
\]
Consequently,
\[
 \Pp_{f,g}(\mathcal E_j^c)\leq\zeta.
\]

Suppose first that $f\in\F_{2s}(M)$.  Since $f$ is feasible in
\eqref{eq:metric-projection},
\begin{equation}\label{eq:projection-under-null}
 \normtwo{\widetilde f-\widehat h_s}
 \leq\normtwo{\widetilde f-f},
 \qquad
 \normtwo{f-\widehat h_s}
 \leq2\normtwo{f-\widetilde f}.
\end{equation}
By \eqref{eq:q-upper-deterministic}, \cref{prop:pilot}, and
\eqref{eq:rate-identity},
\begin{equation}\label{eq:null-q-mean}
 \Ep_{f,g}q\lesssim\eps_n(2s)^2=\rho^2.
\end{equation}
On $\mathcal E_j$, the bound $t_J\leq B_\zeta\rho^2$ shows that the event
$\{\phi_j=1\}$ implies
\[
 q>\frac{\tau-4B_\zeta}{3}\rho^2.
\]
Thus \eqref{eq:null-q-mean} and Markov's inequality give
\[
 \Pp_{f,g}(\phi_j=1)
 \leq\zeta+\frac{A_1}{\tau-4B_\zeta}
\]
for a constant $A_1$ independent of $j$, proving \eqref{eq:type-one}.

Now suppose that $f\in\F_s(M)$ and
$\dist(f,\F_{2s}(M))\geq L\rho$.  Since
$\widehat h_s\in\F_{2s}(M)\subset\F_{s}(M)$,
\begin{equation}\label{eq:alternative-full-distance}
 \normtwo{f-\widehat h_s}\geq L\rho.
\end{equation}
Also $f,\widehat h_s\in\B_s(M)$ by the nesting property \eqref{eq:nesting}, and hence \eqref{eq:approximation} yields
\begin{equation}\label{eq:alternative-tail}
 \normtwo{(I-P_J)(f-\widehat h_s)}
 \leq2A_BM2^{-Js}\lesssim\rho.
\end{equation}
Applying \eqref{eq:coercive-total} to
$r=f-\widehat h_s$, and using
\eqref{eq:alternative-full-distance}--\eqref{eq:alternative-tail}, gives
\begin{align*}
 q^{1/2}
 &\geq
 c\normtwo{f-\widehat h_s}
 -(c+C)\normtwo{(I-P_J)(f-\widehat h_s)}\\
 &\geq
 cL\rho-2(c+C)A_BM2^{-Js}\\
 &\geq
 (cL-B_0)\rho,
\end{align*}
where $B_0<\infty$ is independent of $j$ by \eqref{eq:balancing}.  Under
the stated conditions on $L$, this implies
\[
 q\geq(cL-B_0)^2\rho^2>\tau\rho^2.
\]
On $\mathcal E_j$, we have
$\overline q_J(\widehat h_s)\geq q$, and hence $\phi_j=1$.  
Thus
\[
 \{\phi_j=0\}\subseteq\mathcal E_j^c.
\]
Since $\Pp_{f,g}(\mathcal E_j^c)\leq\zeta$, it follows that
$\Pp_{f,g}(\phi_j=0)\leq\zeta$, proving \eqref{eq:type-two}.
\end{proof}

\subsection{Assembly of the confidence set}

\begin{proof}[Proof of \Cref{thm:main}]
Suppose first that $\beta_+\leq2\beta_-$, and set $s=\beta_-$.  Choose
$\zeta<\min\{\alpha,\alpha'\}/2$ and define
\begin{equation}\label{eq:narrow-final-set}
 C_n=C_{n,s}.
\end{equation}
By \eqref{eq:nesting}, $\F_\beta(M)\subseteq\F_s(M)$ for every
$\beta\in[s,\beta_+]$, so \eqref{eq:one-block-coverage} gives honesty
uniformly over the full smoothness range.

For the diameter, the assumption $\beta_+\leq2\beta_-$ gives
\[
 [\beta_-,\beta_+]\subseteq[s,2s].
\]
Hence the uniform bound \eqref{eq:one-block-diameter} applies throughout the
full smoothness range.  Choose $\eta>0$ so that
$\zeta+\eta<\alpha'$.  Then, for the corresponding constant
$K=K(\eta,s)$,
\[
 \limsup_{n\to\infty}
 \sup_{\substack{\beta\in[\beta_-,\beta_+]\\
 f\in\F_\beta(M),\ g\in\G(c,C)}}
 \Pp_{f,g}\left\{\diam(C_n)>K\eps_n(\beta)\right\}
 \leq\zeta+\eta<\alpha'.
\]
This proves part~(i).

Now suppose that $\beta_+>2\beta_-$.  For $j<N$, let
\begin{equation}\label{eq:shells}
 \mathcal S_j(L)=
 \left\{f\in\F_{s_j}(M):
 \dist(f,\F_{s_{j+1}}(M))\geq L\rho_n(s_j)\right\},
 \qquad
 \mathcal S_N=\F_{s_N}(M).
\end{equation}
These shells are disjoint.  Indeed, membership in a later shell implies
membership in $\F_{s_{j+1}}(M)$ for every earlier index $j$, and hence rules
out the earlier positive-separation condition. 
Thus every $f\in\F_n^{\mathrm{sep}}(L)$ has a unique shell index, denoted
$k(f)$.

For $n<n_{\mathrm{test}}$, retain the standing convention
$C_n=L^2(dx)$.  We therefore assume $n\geq n_{\mathrm{test}}$ throughout
the remainder of the construction. 
The wide-range procedure has four steps:
\begin{enumerate}
\item  construct the single-range confidence ball $C_{n,s_j}$ for each
      dyadic smoothness range;
\item test against each smoother adjacent class $\F_{s_{j+1}}(M)$;
\item choose the first index at which the smoother class is rejected, using
      the top index if no test rejects;
\item report the confidence ball attached to that selected index.
\end{enumerate}
Accordingly, define
\begin{equation}\label{eq:selected-shell}
 \widehat k=\min\{j<N:\phi_j=1\},
\end{equation}
with $\widehat k=N$ if no test rejects, and set
\begin{equation}\label{eq:wide-final-set}
 C_n=C_{n,s_{\widehat k}}.
\end{equation}
If the truth lies in shell $k$, then tests $1,\ldots,k-1$ are under their
smoother nulls, while test $k$ is under its separated alternative when
$k<N$.  This is the ideal rejection pattern that the selection rule seeks to
recover.

We now choose the constants in the following order.  First choose $\zeta>0$
so small that
\begin{equation}\label{eq:zeta-choice}
 (N+1)\zeta\leq\alpha,
 \qquad
 (N+2)\zeta\leq\frac{\alpha'}3.
\end{equation}
This determines $\lambda_\zeta$ and $B_\zeta$.  Next choose
$\tau>4B_\zeta$ so large that
\begin{equation}\label{eq:tau-choice}
 N\frac{A_1}{\tau-4B_\zeta}\leq\frac{\alpha'}3.
\end{equation}
Finally choose $L_0$ so large that
\begin{equation}\label{eq:L0-choice}
 cL_0>B_0,
 \qquad
 (cL_0-B_0)^2>\tau.
\end{equation}
Fix any $L\geq L_0$.

The same residual blocks $A$ and $B$ are used to form
$\overline q_{J_n(s_j)}(a_{s_j})$ for the confidence balls and
$\overline q_{J_n(s_j)}(\widehat h_{s_j})$ for the adjacent tests, for all
$j$.  This does not require independence across indices: both $a_{s_j}$ and
$\widehat h_{s_j}$ are measurable with respect to the pilot blocks
$I_0,I_1$, while $A,B$ are independent of those blocks.  Conditional on
$I_0,I_1$, \cref{lem:cross-product-bound} therefore applies separately at
each index, and the simultaneous and selection-error bounds below follow by
union bounds over the finite grid.

For fixed $(f,g)$, define the simultaneous residual-bound event
\begin{equation}\label{eq:simultaneous-bound-event}
\begin{aligned}
 \mathcal E_n(f,g)
 =
 \bigcap_{j=1}^N
 \Bigl\{
 &q_{J_n(s_j)}(f-a_{s_j};g)
 \leq
 \overline q_{J_n(s_j)}(a_{s_j})
 \leq
 3q_{J_n(s_j)}(f-a_{s_j};g)
 +4t_{J_n(s_j)}
 \Bigr\}.
\end{aligned}
\end{equation}
Conditional on the pilot blocks, \cref{lem:cross-product-bound} and a union
bound give
\begin{equation}\label{eq:simultaneous-bound-probability}
 \Pp_{f,g}\bigl\{\mathcal E_n(f,g)\mid I_0,I_1\bigr\}
 \geq1-N\zeta.
\end{equation}
In particular,
\[
 \Pp_{f,g}\{\mathcal E_n(f,g)\}\geq1-N\zeta.
\]

We first prove coverage.  Let $k=k(f)$.  On
$\mathcal E_n(f,g)$, if $\widehat k\leq k$, then
\[
 f\in\F_{s_k}(M)\subseteq\F_{s_{\widehat k}}(M). 
\]
Since $a_{s_{\widehat k}}\in V_{J_n(s_{\widehat k})}$, we have
\[
 (I-P_{J_n(s_{\widehat k})})
 (f-a_{s_{\widehat k}})
 =
 (I-P_{J_n(s_{\widehat k})})f.
\]
The inequality defining $\mathcal E_n(f,g)$, together with
\eqref{eq:coercive-square}, \eqref{eq:approximation}, and
\eqref{eq:block-radius}, therefore gives
\[
 f\in C_{n,s_{\widehat k}}.
\]
If $k<N$, the only remaining way coverage can fail is if
$\widehat k>k$, which requires $\phi_k=0$; by \eqref{eq:type-two}, this event
has probability at most $\zeta$.  If $k=N$, the inequality
$\widehat k>k$ is impossible.  Consequently,
\begin{equation}\label{eq:wide-coverage-bound}
 \sup_{\substack{f\in\F_n^{\mathrm{sep}}(L)\\g\in\G(c,C)}}
 \Pp_{f,g}(f\notin C_n)
 \leq
 N\zeta+\zeta
 =
 (N+1)\zeta
\end{equation}
for all sufficiently large $n$.  By \eqref{eq:zeta-choice}, the right-hand
side is at most $\alpha$.

For the diameter, first bound the shell-selection error.  If the truth lies
in shell $k=k(f)$, then
\[
 \{\widehat k\neq k\}
 \subseteq
 \bigcup_{j<k}\{\phi_j=1\}
 \,\cup\,
 \bigl(\{k<N\}\cap\{\phi_k=0\}\bigr).
\]
Indeed, a rejection before index $k$ makes $\widehat k<k$, while, if no
earlier test rejects, the inequality $\widehat k>k$ can occur only when
$k<N$ and test $k$ fails to reject.  Applying
\cref{lem:adjacent-tests} and a union bound therefore gives
\begin{equation}\label{eq:selection-error}
 \sup_{\substack{f\in\F_n^{\mathrm{sep}}(L)\\g\in\G(c,C)}}
 \Pp_{f,g}\{\widehat k\neq k(f)\}
 \leq
 (N+1)\zeta+
 N\frac{A_1}{\tau-4B_\zeta}.
\end{equation}
By \eqref{eq:zeta-choice} and \eqref{eq:tau-choice}, this is at most $2\alpha'/3$.

On the event $\{\widehat k=k(f)\}$, the reported set is
$C_n=C_{n,s_{k(f)}}$.  Consequently, for every $K>0$,
\[
 \left\{\diam(C_n)>K\eps_n(\beta)\right\}
 \subseteq
 \{\widehat k\neq k(f)\}
 \cup
 \left\{
 \diam(C_{n,s_{k(f)}})>K\eps_n(\beta)
 \right\}.
\]
It remains to verify that the second event is controlled by
\cref{prop:one-block} for every possible relation between $\beta$ and the
true shell smoothness. 
Write $s=s_{k(f)}$ and
suppose $f\in\F_\beta(M)$.  
If $\beta<s$, then shell membership gives $f\in\F_s(M)$.  Since
$\eps_n(s)\leq\eps_n(\beta)$,
\[
 \left\{
 \diam(C_{n,s})>K\eps_n(\beta)
 \right\}
 \subseteq
 \left\{
 \diam(C_{n,s})>K\eps_n(s)
 \right\},
\]
so \cref{prop:one-block}, applied at the smoothness label $s$, gives the
required bound.  If $\beta\geq s$ and $k(f)<N$, then $\beta<2s$, since
otherwise nesting would imply
$f\in\F_{2s}(M)$, contradicting the defining separation of the shell.  At
the top shell, $\beta\leq\beta_+\leq2s_N$.  Thus
\cref{prop:one-block} applies in every case. 
Take the maximum of its constants over the finite
grid and apply the one-block bound with auxiliary error $\eta=\alpha'/3$.
Combining it with \eqref{eq:selection-error}, the limiting diameter error is at most
\begin{equation*}
 (N+2)\zeta
 +N\frac{A_1}{\tau-4B_\zeta}
 +\frac{\alpha'}3
 \leq\alpha'
\end{equation*}
by \eqref{eq:zeta-choice} and \eqref{eq:tau-choice}.  This proves
\eqref{eq:main-wide-diameter} and completes part~(ii).
\end{proof}

\section{Proof of the adaptive pilot bound}\label{sec:pilot-proof}

This section proves \cref{prop:pilot}.  The wavelet properties used below are
collected in \cref{prop:app-wavelet-estimates}.

\subsection{A finite-list validation bound}

Conditional on any bounded candidate $h$ independent of the validation
sample,
\[
 \Ep\left(\{Y-h(X)\}^2\mid h\right)
 =
 \Ep\{f(X)(1-f(X))\}+\normg{h-f}^2.
\]
We see that comparing validation losses is equivalent, conditionally on the
candidates, to comparing their $L^2(g)$ risks.

\begin{lemma}\label{lem:validation}
For a positive integer $K_0$, let $h_1,\ldots,h_{K_0}$ be random measurable
functions taking values in
$[0,1]$, independent of an i.i.d.\ validation sample
$(X_i,Y_i)_{i=1}^m$ from \eqref{eq:model}.  Let $\widehat k$ be the smallest
minimizer of
\begin{equation*}
 \frac1m\sum_{i=1}^m\{Y_i-h_k(X_i)\}^2
\end{equation*}
over $1\leq k\leq K_0$.  Then, for a universal constant $C_0$,
\begin{equation}\label{eq:validation-oracle}
 \Ep\normg{h_{\widehat k}-f}^2
 \leq
 3\min_{1\leq k\leq K_0}\Ep\normg{h_k-f}^2
 +C_0\frac{\log(2K_0m)}m.
\end{equation}
\end{lemma}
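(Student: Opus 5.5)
We condition on the candidates $h_1,\ldots,h_{K_0}$ throughout; by independence the validation pairs remain i.i.d.\ from \eqref{eq:model}. We write $\Delta_k=\normg{h_k-f}^2$ and fix an oracle index $k^*$ minimizing $\Delta_k$ given the candidates. Since $\Ep\min_k\Delta_k\le\min_k\Ep\Delta_k$, it suffices to prove the conditional bound $\Ep(\Delta_{\widehat k}\mid h)\le 3\Delta_{k^*}+C_0\log(2K_0m)/m$ and then take expectations.

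The main step is a Bernstein bound for the excess-loss increments. For each $k$ we set
\[
 Z_{k,i}=\{Y_i-h_k(X_i)\}^2-\{Y_i-f(X_i)\}^2
 =\{h_k(X_i)-f(X_i)\}\{h_k(X_i)+f(X_i)-2Y_i\}.
\]
Conditionally on the candidates, $\Ep Z_{k,i}=\Delta_k$ by the squared-loss identity displayed before the lemma. We also have $|Z_{k,i}|\le 2|h_k-f|(X_i)\le2$, because $h_k,f,Y\in[0,1]$ give $|h_k+f-2Y|\le2$. This yields the crucial variance bound $\operatorname{Var}(Z_{k,i})\le\Ep Z_{k,i}^2\le4\Delta_k$. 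Bernstein's inequality then gives, for each $k$ and each $x>0$, with probability at least $1-2e^{-x}$,
\[
 \Bigl|\bar Z_k-\Delta_k\Bigr|\le\sqrt{\frac{8\Delta_kx}{m}}+\frac{4x}{3m}
 \le\frac{\Delta_k}{4}+\frac{C_1x}{m},
\]
where the last step uses $\sqrt{ab}\le a/4+b$. Applying a union bound over $k$ with $x=\log(2K_0)+u$ makes this hold simultaneously with probability at least $1-2e^{-u}$.

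On that event, minimality of $\widehat k$ gives $\bar Z_{\widehat k}\le\bar Z_{k^*}$. Hence
\[
 \tfrac34\Delta_{\widehat k}-\tfrac{C_1x}{m}\le\tfrac54\Delta_{k^*}+\tfrac{C_1x}{m},
\]
so $\Delta_{\widehat k}\le\tfrac53\Delta_{k^*}+\tfrac{8C_1x}{3m}$, which is within the factor $3$. We convert this tail bound into expectation by integrating over $u$, using $\Delta_{\widehat k}\le1$. Alternatively, we can take $u=\log m$ and bound the failure event's contribution by $2/m$. Either route gives the additive term $C_0\log(2K_0m)/m$ with a universal $C_0$.

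The only delicate point is the variance-to-mean bound $\Ep Z^2\le4\Delta_k$. It is what produces the fast rate $1/m$ instead of $1/\sqrt m$, and it relies on boundedness of all of $h_k$, $f$, and $Y$ in $[0,1]$; this is why the candidates are clipped. Measurability of $\widehat k$ is immediate, since it is the smallest minimizer over a finite list.
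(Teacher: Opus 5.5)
Your proof is correct and follows essentially the same route as the paper. Like the paper, you condition on the candidates, apply Bernstein to the excess-loss increments using the variance-to-mean bound $\Ep Z_{k,i}^2\le 4\normg{h_k-f}^2$, absorb the square-root term by AM--GM, take a union bound over $k$ at level $x=\log(2K_0m)$, compare $\widehat k$ with the oracle index, and bound the failure event by $\normg{h_{\widehat k}-f}^2\le 1$. The differences are only in constants: you split off a quarter of the risk rather than half, which gives $5/3$ in place of $3$, and your bound $|Z_{k,i}|\le 2$ is slightly looser than the paper's $|\Delta_k|\le 1$.
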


\begin{proof}
Let $\mathcal H=\sigma(h_1,\ldots,h_{K_0})$ and condition on
$\mathcal H$.  For each $k$, define
\begin{equation*}
 \Delta_k(X,Y)=\{Y-h_k(X)\}^2-\{Y-f(X)\}^2,
 \qquad
 R_k=\Ep(\Delta_k\mid\mathcal H)=\normg{h_k-f}^2.
\end{equation*}
The subtraction of the common empirical loss of $f$ does not change the
minimizer.  Because $Y,h_k,f\in[0,1]$,
\begin{equation}\label{eq:Bernstein-properties}
 |\Delta_k|\leq1,
 \qquad
 \Ep(\Delta_k^2\mid\mathcal H)\leq4R_k.
\end{equation}
Bernstein's inequality, followed by
$\sqrt{8R_kx/m}\leq R_k/2+4x/m$, shows that for a universal $C_1$,
\begin{equation}\label{eq:validation-concentration}
 \Pp\left(
 \left|\frac1m\sum_{i=1}^m\Delta_k(X_i,Y_i)-R_k\right|
 >\frac12R_k+C_1\frac xm
 \ \middle|\ \mathcal H
 \right)
 \leq2e^{-x}.
\end{equation}
Take $x=\log(2K_0m)$ and use a union bound.  With conditional probability at
least $1-m^{-1}$, \eqref{eq:validation-concentration} holds for every $k$.
On this event, if $k_*$ minimizes $R_k$, then
\begin{align*}
 \frac12R_{\widehat k}-a
 &\leq \frac1m\sum_{i=1}^m\Delta_{\widehat k}(X_i,Y_i)\\
 &\leq \frac1m\sum_{i=1}^m\Delta_{k_*}(X_i,Y_i)
 \leq\frac32R_{k_*}+a,
\end{align*}
where $a=C_1\log(2K_0m)/m$.  Hence
$R_{\widehat k}\leq3R_{k_*}+4a$.  On the complementary event, $R_{\widehat k}\leq1$.  Taking conditional
expectations therefore gives
\[
 \Ep(R_{\widehat k}\mid\mathcal H)
 \leq3\min_{1\leq k\leq K_0}R_k
 +4a+\frac1m.
\]
Taking expectations and using
\[
 \Ep\min_{1\leq k\leq K_0}R_k
 \leq
 \min_{1\leq k\leq K_0}\Ep R_k
\]
proves \eqref{eq:validation-oracle} after increasing the constant.
\end{proof}

\subsection{Risk of the candidate estimators}

For $J\in\mathcal J_m$, let
\begin{equation}\label{eq:population-gram}
 G_J=\Ep_g\{\Phi_J(X)\Phi_J(X)^\top\}.
\end{equation}
For every $u\in\R^{D_J}$, orthonormality in $L^2(dx)$ and
\eqref{eq:G} give
\begin{equation}\label{eq:population-gram-bounds}
 c\norm u_{\ell^2}^2
 \leq u^\top G_Ju
 \leq C\norm u_{\ell^2}^2.
\end{equation}
Thus $cI\preceq G_J\preceq CI$.

\begin{lemma}\label{lem:candidate-risk}
There exist constants $a>0$ and $C_1<\infty$ such that, for every
$\beta\in[\beta_-,\beta_+]$, every $J\in\mathcal J_m$,
$f\in\F_\beta(M)$, and $g\in\G(c,C)$,
\begin{equation}\label{eq:candidate-risk}
 \Ep_{f,g}\normg{\widehat f_J-f}^2
 \leq
 C_1\left(
 \inf_{v\in V_J}\normg{v-f}^2
 +\frac{D_J}{m}
 +D_J e^{-am/D_J}
 \right).
\end{equation}
The constants are uniform over the displayed choices of
$\beta,J,f$, and $g$.
\end{lemma}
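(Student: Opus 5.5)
We will prove \cref{lem:candidate-risk} by the standard analysis of random-design least squares, with the two regimes of $\lambda_{\min}(\widehat G_J)$ treated separately. Write $f_J=\Phi_J^\top\theta_J$ for the $L^2(g)$-projection of $f$ onto $V_J$, so that $\theta_J=G_J^{-1}\Ep_g\{f(X)\Phi_J(X)\}$ and $\normg{f_J-f}^2=\inf_{v\in V_J}\normg{v-f}^2$. Let $\Omega_J=\{\lambda_{\min}(\widehat G_J)\geq c/2\}$. Since $0\leq f\leq1$ and clipping to $[0,1]$ is $1$-Lipschitz and fixes $f$ pointwise, we have $|\widehat f_J-f|\leq|\Phi_J^\top\widehat\theta_J-f|$ and also $|\widehat f_J-f|\leq1$. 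Hence
\[
 \Ep\normg{\widehat f_J-f}^2\leq \Ep\bigl[\normg{\Phi_J^\top\widehat\theta_J-f}^2\1_{\Omega_J}\bigr]+\Pp(\Omega_J^c).
\]

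\emph{Step 1: the bad event.} We bound $\Pp(\Omega_J^c)$ by the matrix Chernoff inequality. The summands $\Phi_J(X_i)\Phi_J(X_i)^\top/m$ are positive semidefinite, with operator norm at most $\norm{\Phi_J}_\infty^2/m\leq A D_J/m$ by the kernel bound \eqref{eq:app-kernel-bounds}. Their mean $G_J$ satisfies $G_J\succeq cI$ by \eqref{eq:population-gram-bounds}. Matrix Chernoff then gives
\[
 \Pp\bigl(\lambda_{\min}(\widehat G_J)<c/2\bigr)\leq D_J e^{-am/D_J}
\]
with $a$ depending only on $c$ and the basis. This produces the third term in \eqref{eq:candidate-risk}.

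\emph{Step 2: the good event.} On $\Omega_J$, decompose
\[
 \widehat\theta_J-\theta_J=\widehat G_J^{-1}\widehat\xi_J,
 \qquad
 \widehat\xi_J=\frac1m\sum_{i\in I_0}\{Y_i-f_J(X_i)\}\Phi_J(X_i).
\]
The vector $\widehat\xi_J$ has mean zero, because $\Ep[(f-f_J)(X)\Phi_J(X)]=0$ by the normal equations and $\Ep(Y-f(X)\mid X)=0$. Then
\[
 \normg{\Phi_J^\top\widehat\theta_J-f}^2\leq 2\normg{f_J-f}^2+2C\norm{\widehat\theta_J-\theta_J}_{\ell^2}^2
 \leq 2\normg{f_J-f}^2+\frac{8C}{c^2}\norm{\widehat\xi_J}_{\ell^2}^2.
\]
For the second term,
\[
 \Ep\norm{\widehat\xi_J}^2=\frac1m\Ep\bigl[(Y-f_J(X))^2\norm{\Phi_J(X)}^2\bigr],
\]
and we need this to be $\lesssim D_J/m$. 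We split $Y-f_J=(Y-f)+(f-f_J)$.

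\emph{Step 3: the main obstacle.} The first piece is harmless: $\operatorname{Var}(Y\mid X)\leq1/4$ and $\Ep\norm{\Phi_J(X)}^2\leq C D_J$ give a contribution $\lesssim D_J/m$. The real difficulty is the bias piece
\[
 \frac1m\Ep\bigl[(f-f_J)^2(X)\norm{\Phi_J(X)}^2\bigr]\leq \frac{AD_J}{m}\normg{f-f_J}^2,
\]
where we used $\norm{\Phi_J(x)}^2\leq AD_J$ pointwise. Since $D_J\leq m^{d/(2\beta_-+d)}\leq m$ on $\mathcal J_m$, the factor $AD_J/m$ is at most a constant. This bias piece is therefore absorbed into $C_1\inf_{v}\normg{v-f}^2$, and no uniform bound on $\norm{f-f_J}_\infty$ is needed. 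Everything depends on $g$ only through $c\leq g\leq C$, so the constants are uniform over $\beta$, $J$, $f$ and $g$. Combining Steps 1–3 yields \eqref{eq:candidate-risk}.
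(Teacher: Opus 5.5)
Your proposal is correct and follows the paper's proof essentially step for step: matrix Chernoff for the Gram event, the normal-equation representation of $\widehat\theta_J-\theta_J$ around the $L^2(g)$ projection, the bound $\norm{\widehat G_J^{-1}}_{\mathrm{op}}\leq 2/c$ on the good event, and clipping together with the trivial bound $1$ on the bad event. The only difference is in the score second moment. There the paper pulls out $\sup_x K_J(x,x)\leq A_\psi D_J$ and uses $\Ep\{Y-f_J^g(X)\}^2\leq 1/4+\normg{f}^2\leq 5/4$, which holds because $0\in V_J$. So the bias piece you treat as the main obstacle is already $O(D_J/m)$, without needing $D_J\leq m$ to absorb it into the approximation term.
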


\begin{proof}
By \eqref{eq:app-kernel-bounds},
$\norm{\Phi_J(x)}_{\ell^2}^2=K_J(x,x)\leq A_\psi D_J$.
The matrix Chernoff inequality applied to the positive-semidefinite matrices
$\Phi_J(X_i)\Phi_J(X_i)^\top$, together with
\eqref{eq:population-gram-bounds}, gives
\begin{equation}\label{eq:gram-tail}
 \Pp\{\lambda_{\min}(\widehat G_J)<c/2\}
 \leq D_J\exp(-am/D_J)
\end{equation}
for a constant $a=a(c,A_\psi)>0$; see
\cite[Corollary~5.2 and Remark~5.3]{Tropp2012}.

Let $f_J^g=\Phi_J^\top\theta_J^g$ be the orthogonal projection of $f$ onto
$V_J$ in $L^2(g)$.  On the event
$\{\lambda_{\min}(\widehat G_J)\geq c/2\}$, the empirical normal equations
imply
\begin{equation}\label{eq:normal-equation}
 \widehat\theta_J-\theta_J^g
 =\widehat G_J^{-1}\frac1m\sum_{i\in I_0}
 \Phi_J(X_i)\{Y_i-f_J^g(X_i)\}.
\end{equation}
The summands have mean zero, since $f_J^g$ is the $L^2(g)$ projection and
$\Ep(Y\mid X)=f(X)$.  Moreover,
\begin{align}
 \Ep\norm{\frac1m\sum_{i\in I_0}
 \Phi_J(X_i)\{Y_i-f_J^g(X_i)\}}_{\ell^2}^2
 &=\frac1m\Ep\left[
 K_J(X,X)\{Y-f_J^g(X)\}^2\right]
 \notag\\
 &\lesssim\frac{D_J}{m}.
 \label{eq:score-second-moment}
\end{align}
Indeed, the regression identity, the projection property, and the fact that
$0\in V_J$ give
\[
 \Ep\{Y-f_J^g(X)\}^2
 =
 \Ep\{Y-f(X)\}^2+\normg{f-f_J^g}^2
 \leq\frac14+\normg f^2
 \leq\frac54.
\]
On the good Gram event,
$\norm{\widehat G_J^{-1}}_{\mathrm{op}}\leq2/c$. Therefore
\eqref{eq:normal-equation}, \eqref{eq:score-second-moment}, and
$G_J\preceq CI$ give
\begin{equation}\label{eq:estimation-part-candidate}
 \Ep\left[
 \normg{\Phi_J^\top\widehat\theta_J-f_J^g}^2
 \1_{\{\lambda_{\min}(\widehat G_J)\geq c/2\}}
 \right]
 \lesssim\frac{D_J}{m}.
\end{equation}
Let
\[
 \mathcal G_J=\{\lambda_{\min}(\widehat G_J)\geq c/2\}.
\]
Since $f_J^g$ is the $L^2(g)$ projection of $f$ onto $V_J$, we have
\[
 \normg{f-f_J^g}^2
 =
 \inf_{v\in V_J}\normg{f-v}^2,
\]
and $\Phi_J^\top\widehat\theta_J-f_J^g\in V_J$ is orthogonal in
$L^2(g)$ to $f_J^g-f$.  Pointwise clipping to $[0,1]$ cannot increase
squared error from $f\in[0,1]$.  Hence, on $\mathcal G_J$,
\[
 \normg{\widehat f_J-f}^2
 \leq
 \normg{\Phi_J^\top\widehat\theta_J-f_J^g}^2
 +\normg{f_J^g-f}^2.
\]
On $\mathcal G_J^c$, both $\widehat f_J$ and $f$ take values in $[0,1]$,
so $\normg{\widehat f_J-f}^2\leq1$.  Taking expectations and using
\eqref{eq:gram-tail} and \eqref{eq:estimation-part-candidate} proves
\eqref{eq:candidate-risk}.
\end{proof}

\begin{proof}[Proof of \Cref{prop:pilot}]
Conditional on $I_0$, apply \cref{lem:validation} to the candidates
$(\widehat f_J)_{J\in\mathcal J_m}$ and the validation block $I_1$.  Since
$|\mathcal J_m|=O(\log m)$, \eqref{eq:validation-oracle} and
\cref{lem:candidate-risk} imply
\begin{equation}\label{eq:pilot-oracle}
 \Ep\normg{\widetilde f-f}^2
 \lesssim
 \min_{J\in\mathcal J_m}
 \left\{
 \inf_{v\in V_J}\normg{v-f}^2+\frac{D_J}{m}
 +D_Je^{-am/D_J}
 \right\}
 +\frac{\log m}{m}.
\end{equation}
For $f\in\F_\beta(M)$,
\begin{equation}\label{eq:approximation-g}
 \inf_{v\in V_J}\normg{v-f}^2
 \leq\normg{P_Jf-f}^2
 \leq C A_B^2M^22^{-2J\beta}.
\end{equation}
For $\beta\in[\beta_-,\beta_+]$, put
\[
 D_\beta^*=m^{d/(2\beta+d)}.
\]
Since
\[
 D_\beta^*\leq m^{d/(2\beta_-+d)}
\]
and
\[
 \inf_{\beta\in[\beta_-,\beta_+]}D_\beta^*
 =
 m^{d/(2\beta_++d)}
 \longrightarrow\infty,
\]
there is, for all sufficiently large $m$ uniformly over
$\beta\in[\beta_-,\beta_+]$, an index $J\in\mathcal J_m$ such that
\begin{equation}\label{eq:optimal-J-pilot}
 D_J\asymp m^{d/(2\beta+d)}.
\end{equation}
At this index, the approximation and variance terms in
\eqref{eq:pilot-oracle} are both of order
$m^{-2\beta/(2\beta+d)}$, with constants uniform in $\beta$.

Furthermore, uniformly over $J\in\mathcal J_m$,
\begin{equation*}
 \frac{m}{D_J}\geq m^{2\beta_-/(2\beta_-+d)},
\end{equation*}
so the exponential term in \eqref{eq:pilot-oracle} is negligible.  Also,
\begin{equation*}
 \frac{2\beta}{2\beta+d}
 \leq\frac{2\beta_+}{2\beta_++d}<1,
\end{equation*}
which implies that $(\log m)/m=o(m^{-2\beta/(2\beta+d)})$ uniformly over the
fixed smoothness interval.  Thus
\begin{equation}\label{eq:pilot-risk-g}
 \sup_{\substack{f\in\F_\beta(M)\\g\in\G(c,C)}}
 \Ep\normg{\widetilde f-f}^2
 \lesssim m^{-2\beta/(2\beta+d)}.
\end{equation}
Finally, $c\normtwo h^2\leq\normg h^2$ and $m\asymp n$, which gives
\eqref{eq:pilot-risk}.  Measurability follows from the finite candidate list,
the specified first-minimizer rules, and the measurable matrix-inverse and
clipping operations.
\end{proof}

\section{Proof of the residual-bound lemmas}\label{sec:bound-proof}

\subsection{Deterministic coercivity}

\begin{proof}[Proof of \Cref{lem:coercivity}]
Write
\begin{equation}\label{eq:v-w-decomposition}
 v=P_Jr,
 \qquad
 w=(I-P_J)r.
\end{equation}
Projection contraction and $g\leq C$ give
\begin{equation}\label{eq:q-upper-proof}
 q_J(r;g)^{1/2}
 =\normtwo{P_J(rg)}
 \leq\normtwo{rg}
 \leq C\normtwo r,
\end{equation}
which proves \eqref{eq:q-upper-deterministic}.

If $v=0$, \eqref{eq:coercive-one} is immediate.  Otherwise, because
$v\in V_J$,
\begin{align}
 \normtwo{P_J(gv)}\normtwo v
 &\geq\ip{P_J(gv)}v
 =\int_\X g(x)v(x)^2\,dx
 \geq c\normtwo v^2.
 \label{eq:compressed-coercivity}
\end{align}
Thus $\normtwo{P_J(gv)}\geq c\normtwo v$.  On the other hand,
$\normtwo{P_J(gw)}\leq C\normtwo w$.  Since
\begin{equation*}
 P_J(rg)=P_J(gv)+P_J(gw),
\end{equation*}
the reverse triangle inequality gives \eqref{eq:coercive-one}.  Because
$\normtwo v\geq\normtwo r-\normtwo w$, the same inequality gives
\eqref{eq:coercive-total}.  Finally,
\begin{equation*}
 c\normtwo v\leq q_J(r;g)^{1/2}+C\normtwo w.
\end{equation*}
Squaring, using $(x+y)^2\leq2x^2+2y^2$, and then using $v\perp w$, yields
\begin{align*}
 \normtwo r^2
 &=\normtwo v^2+\normtwo w^2\\
 &\leq\frac{2}{c^2}q_J(r;g)
 +\left(1+\frac{2C^2}{c^2}\right)\normtwo w^2,
\end{align*}
which is \eqref{eq:coercive-square}.
\end{proof}

\subsection{Conditional moments and the upper bound}

\begin{proof}[Proof of \Cref{lem:cross-product-bound}]
Condition throughout on $\mathcal H$.  Since $a$ is
$\mathcal H$-measurable and $\mathcal H$ is independent of the observations
indexed by $A\cup B$, conditional on $\mathcal H$ the function $a$ is fixed,
while the observations in $A$ and $B$ remain independent with their original
law. 
Define
\begin{equation}\label{eq:W-mu-Sigma}
 W=\{Y-a(X)\}\Phi_J(X),
 \qquad
 \mu=\Ep(W\mid\mathcal H),
 \qquad
 \Sigma=\operatorname{Var}(W\mid\mathcal H).
\end{equation}
The coordinates of $\mu$ are the coefficients of $P_J((f-a)g)$ in the
level-$J$ scaling basis, so
\begin{equation}\label{eq:mu-q}
 \norm\mu_{\ell^2}^2=q_J(r;g).
\end{equation}
Let
\begin{equation*}
 e_H=S_{H,J}(a)-\mu,
 \qquad H\in\{A,B\}.
\end{equation*}
Conditional on $\mathcal H$, the vectors $e_A,e_B$ are independent and
centered, and each has conditional covariance $\Sigma/\ell$.  Expanding
\begin{equation*}
 T_J(a)=\ip{\mu+e_A}{\mu+e_B}
\end{equation*}
shows immediately that $\Ep(T_J(a)\mid \mathcal H)=\norm\mu_{\ell^2}^2$.  The three
centered terms $\mu^\top e_A$, $\mu^\top e_B$, and $e_A^\top e_B$ have zero
pairwise covariance.  Therefore
\begin{equation}\label{eq:exact-T-variance}
 \operatorname{Var}(T_J(a)\mid \mathcal H)
 =\frac{2}{\ell}\mu^\top\Sigma\mu
 +\frac1{\ell^2}\operatorname{tr}(\Sigma^2).
\end{equation}

For every $u\in\R^{D_J}$,
\begin{align*}
u^\top\Sigma u
&\leq\Ep\bigl(\{u^\top W\}^2\mid\mathcal H\bigr)\\
 &\leq(1+A_0)^2\int_\X\{u^\top\Phi_J(x)\}^2g(x)\,dx\\
 &\leq(1+A_0)^2C\norm u_{\ell^2}^2.
\end{align*}
Hence $\norm\Sigma_{\mathrm{op}}\leq(1+A_0)^2C$ and
$\operatorname{tr}(\Sigma^2)\leq D_J\norm\Sigma_{\mathrm{op}}^2$.
Combining these bounds with \eqref{eq:mu-q} and
\eqref{eq:exact-T-variance} proves \eqref{eq:T-moments}.

It remains to construct the upper bound.  Write $q=q_J(r;g)$ and take
$t=\lambda\sqrt{D_J}/\ell$.  Chebyshev's inequality and
\eqref{eq:T-moments} give
\begin{align}
 \Pp\{|T_J(a)-q|>q/2+t\mid \mathcal H\}
 &\leq
 V\frac{q/\ell+D_J/\ell^2}{(q/2+t)^2}
 \notag\\
 &\leq V\left(\frac1{2\lambda}+\frac1{\lambda^2}\right).
 \label{eq:Chebyshev-bound}
\end{align}
For the last inequality, use
$(q/2+t)^2\geq2qt$, $(q/2+t)^2\geq t^2$, and $D_J\geq1$.
Choose $\lambda=\lambda_\zeta$ so that the last expression is at most
$\zeta$.  On the complementary event,
\begin{equation*}
 T_J(a)+t\in[q/2,3q/2+2t].
\end{equation*}
Multiplying the positive part by two gives
\begin{equation*}
 q\leq2\{T_J(a)+t\}_+\leq3q+4t,
\end{equation*}
which is \eqref{eq:bound-event}.
\end{proof}

\section{Sharpness and design-density smoothness}\label{sec:sharpness}

The upper construction shows that separation by a sufficiently large constant
multiple of $\rho_n(s)$ is enough.  We now prove that a smaller order cannot
suffice.  The lower bound uses the fixed design density $g\equiv1$, so it is
independent of the nuisance issue.

\subsection{A testing lower bound}

\begin{theorem}\label{thm:testing-lower}
Fix $0<r<t<S$ and $M>0$.  There is a function
$f_0\in\bigcap_{0<u<S}\F_u(M)$ such that, for every positive sequence
$\delta_n=o\{\rho_n(r)\}$,
\begin{equation}\label{eq:testing-lower-statement}
 \inf_{\phi_n}
 \left[
 \Pp_{f_0,1}(\phi_n=1)
 +
 \sup_{\substack{f\in\F_r(M)\\
 \dist(f,\B_t(M))\geq\delta_n}}
 \Pp_{f,1}(\phi_n=0)
 \right]
 \longrightarrow1.
\end{equation}
The infimum is over all tests based on the $n$ observations, and the subscript
$1$ denotes the uniform design density.
\end{theorem}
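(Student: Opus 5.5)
Under the uniform design, the plan is a standard Ingster-type mixture (second-moment) lower bound in the spirit of Mukherjee and Sen. Take the null \(f_0\equiv1/2\), or more precisely \(f_0=\tfrac12\cdot\1_{\X}\). Since \(\1_\X\in V_{J_0}\), all detail coefficients \(Q_jf_0\) vanish, and \(\norm{f_0}_{\mathbb B^u_{2,\infty}}=2^{J_0u}\normtwo{P_{J_0}f_0}=2^{J_0u}/2\). This can exceed \(M\) when \(M\) is small. In that case I replace the constant \(1/2\) by \(\kappa\in(0,1)\) small enough that \(2^{J_0 S}\kappa\leq M/2\), which places \(f_0\) in every \(\F_u(M)\), \(u<S\), with slack \(M/2\). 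For the alternatives, fix a level \(j=j_n\) with \(2^{j}\asymp n^{2/(4r+d)}\) and a family of \(\asymp 2^{jd}\) interior wavelets \(\psi_{j,k}\) at level \(j\) with disjoint supports. Set
\[
 f_\xi=f_0+\gamma\sum_{k}\xi_k\psi_{j,k},\qquad \xi_k\in\{\pm1\}\ \text{i.i.d. Rademacher},
\]
with amplitude \(\gamma=\gamma_n\) chosen so that \(\gamma\,2^{jd/2}\asymp\delta_n'\). Here \(\delta_n'\) lies between \(\delta_n\) and a small multiple of \(\rho_n(r)\); concretely, take \(\delta_n'=\max\{\delta_n,\rho_n(r)/\log n\}\cdot\text{const}\) so that \(\delta_n'=o(\rho_n(r))\) still.

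The first step is membership. \(\norm{\gamma\sum\xi_k\psi_{j,k}}_{\mathbb B^r_{2,\infty}}\asymp 2^{jr}\gamma 2^{jd/2}\asymp 2^{jr}\delta_n'\). Since \(2^{-jr}\asymp\rho_n(r)\), this is \(o(1)\) and hence at most \(M/2\). Boundedness of \(\psi\) gives \(\norm{f_\xi-f_0}_\infty\lesssim\gamma2^{jd/2}\to0\), so \(0\leq f_\xi\leq1\). The second step is separation from \(\B_t(M)\). For any \(h\in\B_t(M)\), \(\normtwo{f_\xi-h}\geq\normtwo{Q_j(f_\xi-h)}\geq\normtwo{Q_jf_\xi}-M2^{-jt}\). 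Now \(\normtwo{Q_jf_\xi}\asymp\delta_n'\), while \(2^{-jt}=n^{-2t/(4r+d)}=o(\rho_n(r)^{1+\epsilon})\) for some \(\epsilon>0\) because \(t>r\). I must arrange \(\delta_n'\) so that \(2^{-jt}=o(\delta_n')\). The choice \(\delta_n'\geq\rho_n(r)/\log n\) handles this, since the polynomial gap beats the logarithm. With constants tuned, every \(f_\xi\) then satisfies \(\dist(f_\xi,\B_t(M))\geq\delta_n\) for large \(n\).

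The third and main step is to show that \(\TV(\Pp_{f_0,1}^{\otimes n},\bar\Pp^n)\to0\), where \(\bar\Pp^n=\Ep_\xi\Pp_{f_\xi,1}^{\otimes n}\). By the standard reduction, the bracket in \eqref{eq:testing-lower-statement} is at least \(1-\TV\), and it is trivially at most \(1\). I bound \(\TV\) through the \(\chi^2\) divergence. With uniform design, the likelihood ratio per observation is \(1+\Delta_\xi(x,y)\), where
\[
 \Delta_\xi(x,y)=\frac{(f_\xi-f_0)(x)\,(y-f_0(x))}{f_0(x)(1-f_0(x))}.
\]
The rescaled response is centered with \(\Ep(\cdot\mid x)\)-variance \(1/(f_0(1-f_0))\). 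This gives
\[
 1+\chi^2=\Ep_{\xi,\xi'}\Bigl(1+\tfrac{1}{\kappa(1-\kappa)}\ip{f_\xi-f_0}{f_{\xi'}-f_0}\Bigr)^n\leq\Ep\exp\Bigl(c_\kappa n\gamma^2\sum_k\xi_k\xi'_k\Bigr),
\]
using orthonormality of the \(\psi_{j,k}\). Applying \(\cosh(x)\leq e^{x^2/2}\) coordinatewise bounds this by \(\exp(c n^2\gamma^4 2^{jd})\). Since \(n^2\gamma^42^{jd}\asymp n^2\delta_n'^4 2^{-jd}\) and \(n^2\rho_n(r)^4 2^{-jd}\asymp1\) by the balancing \(2^{jd}\asymp n^{2d/(4r+d)}\), the exponent tends to \(0\) precisely because \(\delta_n'=o(\rho_n(r))\). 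This is the key computation, and I expect the main obstacle to be the bookkeeping there. The points to check are that the Bernoulli likelihood expansion is exact, which it is because it is linear in \(y\); that \(f_0\) is bounded away from \(0\) and \(1\); and that boundary-corrected wavelets can be avoided by choosing interior \(\psi_{j,k}\) from the fixed prototype described in \cref{app:wavelets}, so that disjoint supports and \(\norm{\psi_{j,k}}_\infty\lesssim2^{jd/2}\) hold uniformly.
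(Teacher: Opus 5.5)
Your proposal is correct and follows essentially the same route as the paper: a fixed small constant null, a Rademacher mixture of interior level-$j_n$ wavelets with $2^{j_n}\asymp n^{2/(4r+d)}$, the exact Bernoulli cross-moment identity, and the bound $\cosh x\leq e^{x^2/2}$, which gives an exponent of order $(\delta_n'/\rho_n(r))^4\to0$. The only difference is cosmetic: you take the intermediate amplitude $\max\{\delta_n,\rho_n(r)/\log n\}$, whereas the paper uses $\kappa_n\widetilde\rho_n$ with $\kappa_n=\max\{(\delta_n/\widetilde\rho_n)^{1/2},2^{-j_n(t-r)/2}\}$; both choices are $o\{\rho_n(r)\}$ while dominating $\delta_n$ and $2^{-j_nt}$, which is all the argument needs.
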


\begin{proof}
Choose a constant $p_0\in(0,1/4)$ so small that
\begin{equation}\label{eq:p0-small}
 \norm{p_0}_{\mathbb B^u_{2,\infty}}\leq M/4
 \qquad\text{for every }u\in[0,S].
\end{equation}
This is possible by \eqref{eq:app-constant-Besov-norm}; we take
$f_0\equiv p_0$.

For every sufficiently large resolution level $j$, take the interior family
$(\psi_{j,k})_{k\in\mathcal K_j}$ from \eqref{eq:app-interior-family}.  By
\eqref{eq:app-interior-cardinality}, its cardinality
$m_j:=|\mathcal K_j|$ satisfies $m_j\asymp2^{jd}$; the remaining properties
used below are recorded at the end of \cref{app:wavelets}.
Choose $j=j_n$ so that
\begin{equation}\label{eq:lower-j-choice}
 2^{j_n}\asymp n^{2/(4r+d)}.
\end{equation}
Then
\begin{equation}\label{eq:rho-j-equivalence}
 2^{-j_nr}\asymp\rho_n(r),
 \qquad
 n^2 2^{-j_n(4r+d)}\asymp1.
\end{equation}

Set $\widetilde\rho_n=2^{-j_nr}$ and define
\begin{equation}\label{eq:kappa-lower}
 \kappa_n=
 \max\left\{
 \left(\frac{\delta_n}{\widetilde\rho_n}\right)^{1/2},
 2^{-j_n(t-r)/2}
 \right\}.
\end{equation}
Because $\delta_n=o\{\rho_n(r)\}$ and
$\widetilde\rho_n\asymp\rho_n(r)$, we have $\kappa_n\to0$.  Fix a  constant $a_0>0$ and put
\begin{equation}\label{eq:b-lower}
 b_n=a_0\kappa_n\frac{2^{-j_nr}}{\sqrt{m_{j_n}}}.
\end{equation}
For $\theta=(\theta_k)_{k\in\mathcal K_{j_n}}
\in\{-1,1\}^{m_{j_n}}$, define
\begin{equation}\label{eq:lower-alternatives}
 f_\theta
 =p_0+b_n\sum_{k\in\mathcal K_{j_n}}\theta_k\psi_{j_n,k}.
\end{equation}

We first check that these are admissible alternatives.  The only nonzero
detail block of $f_\theta-p_0$ is at level $j_n$, and
\begin{equation}\label{eq:r-Besov-size-lower}
 2^{j_nr}\normtwo{Q_{j_n}(f_\theta-p_0)}
 =2^{j_nr}b_n\sqrt{m_{j_n}}
 =a_0\kappa_n.
\end{equation}
Together with \eqref{eq:p0-small}, this places every $f_\theta$ in
$\B_r(M)$ for all sufficiently large $n$.  Bounded overlap and the standard
$L^\infty$ scaling of wavelets give
\begin{equation}\label{eq:supnorm-lower-perturbation}
 \norm{f_\theta-p_0}_\infty
 \lesssim b_n2^{j_nd/2}
 \lesssim a_0\kappa_n2^{-j_nr}=o(1),
\end{equation}
so $0\leq f_\theta\leq1$ for all large $n$.

For any $h\in\B_t(M)$, projection onto the level-$j_n$ detail space gives
\begin{align}
 \normtwo{f_\theta-h}
 &\geq\normtwo{Q_{j_n}(f_\theta-h)}
 \notag\\
 &\geq a_0\kappa_n2^{-j_nr}-M2^{-j_nt}.
 \label{eq:distance-smoother-lower}
\end{align}
The second term is negligible relative to the first because
$\kappa_n\geq2^{-j_n(t-r)/2}$:
\begin{equation*}
 \frac{M2^{-j_nt}}{a_0\kappa_n2^{-j_nr}}
 \leq\frac{M}{a_0}2^{-j_n(t-r)/2}\longrightarrow0.
\end{equation*}
Hence
\begin{equation}\label{eq:distance-lower-final}
 \dist(f_\theta,\B_t(M))
 \geq\frac{a_0}{2}\kappa_n\widetilde\rho_n
 \gg\delta_n,
\end{equation}
where the last relation follows from
$\kappa_n\geq(\delta_n/\widetilde\rho_n)^{1/2}$.  Thus every $f_\theta$ belongs
to the alternative set in \eqref{eq:testing-lower-statement} for all large
$n$.

It remains to show that the alternatives are statistically indistinguishable
from $f_0$.  Let $P_0$ be the one-observation law under $(f_0,g\equiv1)$ and
$P_\theta$ the law under $(f_\theta,g\equiv1)$.  Let
$L_\theta=dP_\theta^{\otimes n}/dP_0^{\otimes n}$, and let $\overline P_n$ be
the uniform mixture of $P_\theta^{\otimes n}$ over all sign vectors.  A direct
Bernoulli likelihood calculation gives, for independent sign vectors
$\theta,\theta'$,
\begin{equation}\label{eq:Bernoulli-cross-moment}
 \Ep_0(L_\theta L_{\theta'})
 =\left(
 1+\frac{b_n^2}{p_0(1-p_0)}
 \sum_{k\in\mathcal K_{j_n}}\theta_k\theta_k'
 \right)^n.
\end{equation}
Indeed, for one observation the cross moment equals
\begin{equation*}
 1+\frac1{p_0(1-p_0)}
 \int_\X(f_\theta-p_0)(f_{\theta'}-p_0)\,dx,
\end{equation*}
and orthonormality gives the sum in
\eqref{eq:Bernoulli-cross-moment}.

For large $n$, the expression inside parentheses is positive uniformly in the
signs, because $m_{j_n}b_n^2=o(1)$.  Set
\[
 \lambda_n=\frac{b_n^2}{p_0(1-p_0)}.
\]
Since the variables $\theta_k\theta_k'$ are independent Rademacher variables,
\eqref{eq:Bernoulli-cross-moment}, $(1+x)^n\leq e^{nx}$, and
$\cosh x\leq e^{x^2/2}$ give
\begin{align}
 1+\chi^2(\overline P_n,P_0^{\otimes n})
 &=
 \Ep_{\theta,\theta'}
 \left(
 1+\lambda_n
 \sum_{k\in\mathcal K_{j_n}}\theta_k\theta_k'
 \right)^n
 \notag\\
 &\leq
 \Ep_{\theta,\theta'}
 \exp\left\{
 n\lambda_n
 \sum_{k\in\mathcal K_{j_n}}\theta_k\theta_k'
 \right\}
 \notag\\
 &=
 \{\cosh(n\lambda_n)\}^{m_{j_n}}
 \notag\\
 &\leq
 \exp\left\{
 \frac12m_{j_n}n^2\lambda_n^2
 \right\}
 \notag\\
 &\leq
 \exp\left\{
 C a_0^4\kappa_n^4
 n^22^{-j_n(4r+d)}
 \right\}
 \longrightarrow1.
 \label{eq:chi-square-lower}
\end{align}
Here the last inequality follows from
$m_{j_n}\asymp2^{j_nd}$ and the definition of $b_n$, while the convergence
follows from \eqref{eq:rho-j-equivalence} and $\kappa_n\to0$.  Consequently,
\[
 \TV(\overline P_n,P_0^{\otimes n})
 \leq
 \frac12
 \sqrt{\chi^2(\overline P_n,P_0^{\otimes n})}
 \longrightarrow0.
\]  

For every test $\phi_n$,
\begin{equation*}
 \Pp_{f_0,1}(\phi_n=1)
 +\Ep_{\theta}\Pp_{f_\theta,1}(\phi_n=0)
 \geq1-\TV(\overline P_n,P_0^{\otimes n})=1-o(1).
\end{equation*}
The mixture-averaged type-II error is at most the supremum over the
alternative class, so the infimum in
\eqref{eq:testing-lower-statement} has limit inferior at least one.  The
constant test $\phi_n\equiv0$ has error sum exactly one, giving the reverse
bound and completing the proof.
\end{proof}

\subsection{Consequences for adaptive confidence sets}

\begin{theorem}\label{thm:separation-sharp}
Fix $0<r<t\leq s<S$ with $s>2r$, and let
$0<\alpha,\alpha'<1$ satisfy $2\alpha+\alpha'<1$.  Suppose
$a_n\geq0$ and $a_n=o\{\rho_n(r)\}$.  There is no sequence of confidence sets $C_n$ that is
honest at level $1-\alpha$ under the uniform design over
\begin{equation}\label{eq:sharpness-parameter-space}
 \F_s(M)
 \cup
 \left\{f\in\F_r(M):
 \dist(f,\B_t(M))\geq a_n\right\}
\end{equation}
and for which there is a constant $K<\infty$ satisfying
\begin{equation}\label{eq:sharpness-small-diameter}
 \limsup_{n\to\infty}
 \sup_{f\in\F_s(M)}
 \Pp_{f,1}\left\{\diam(C_n)>K\eps_n(s)\right\}
 \leq\alpha'.
\end{equation}
\end{theorem}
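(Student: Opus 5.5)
We argue by contradiction, turning an honest adaptive confidence set into a test that contradicts \cref{thm:testing-lower}. Suppose $C_n$ is honest at level $1-\alpha$ over \eqref{eq:sharpness-parameter-space} and satisfies \eqref{eq:sharpness-small-diameter} with some $K$. Take $f_0\equiv p_0$ from \cref{thm:testing-lower}, applied with the pair $r<t$. Since $f_0\in\bigcap_{u<S}\F_u(M)$, it lies in $\F_s(M)$, so $f_0$ is in the parameter space. Choose $\delta_n=\max\{a_n,\,n^{-1}\rho_n(r)\}$, or any sequence with $a_n\le\delta_n=o\{\rho_n(r)\}$ and $\delta_n\gg\eps_n(s)$. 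The latter is possible because $s>2r$ gives $\eps_n(s)=o\{\rho_n(r)\}$, via $\rho_n(r)=\eps_n(2r)$ and \eqref{eq:rate-comparison}. For example, take $\delta_n=\max\{a_n,\sqrt{\eps_n(s)\rho_n(r)}\}$. Every $f\in\F_r(M)$ with $\dist(f,\B_t(M))\ge\delta_n$ then belongs to the separated part of \eqref{eq:sharpness-parameter-space}, so honesty applies to all alternatives in \eqref{eq:testing-lower-statement}.

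The test is
\[
 \phi_n=\1\bigl\{C_n\cap\B_t(M)=\varnothing\ \text{ or }\ \diam(C_n)>K\eps_n(s)\bigr\}.
\]
To see that $\phi_n$ is measurable, note that $C_n$ is Effros measurable and $\B_t(M)$ is closed. If one wishes to avoid the intersection event, use instead $\1\{f_0\notin C_n\ \text{or}\ \diam(C_n)>K\eps_n(s)\}$. That version is simpler, but its type-II analysis needs $f_0$ to be far from the alternatives, which holds because $\normtwo{f_\theta-f_0}\gtrsim\kappa_n\rho_n(r)\gg\delta_n$. The paper's statement quantifies over all alternatives, though, so I keep the $\B_t$ version.

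Under $f_0$: since $f_0\in\F_s(M)\subseteq\B_t(M)$ (nesting, as $t\le s$), the event $\{\phi_n=1\}$ is contained in $\{f_0\notin C_n\}\cup\{\diam(C_n)>K\eps_n(s)\}$. Honesty and \eqref{eq:sharpness-small-diameter} then bound the limsup of the type-I error by $\alpha+\alpha'$.

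Under an alternative $f$: if $f\in C_n$ and $\phi_n=0$, there is some $h\in C_n\cap\B_t(M)$ with $\normtwo{f-h}\le\diam(C_n)\le K\eps_n(s)=o(\delta_n)$. This contradicts $\dist(f,\B_t(M))\ge\delta_n$ for large $n$. Hence $\{\phi_n=0\}\subseteq\{f\notin C_n\}$, and honesty gives a type-II limsup of at most $\alpha$, uniformly over the alternatives.

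Summing, the limsup of the error sum is at most $2\alpha+\alpha'<1$, which contradicts \eqref{eq:testing-lower-statement}. The main obstacle is the bookkeeping: choosing $\delta_n$ so that it lies simultaneously above $a_n$ and $\eps_n(s)$ but below $\rho_n(r)$, and doing so uniformly inside a liminf/limsup argument. The case $a_n=0$ needs care; choosing $\delta_n$ as above handles it. The measurability of $\phi_n$ also needs checking; Effros measurability handles the intersection event, and the diameter of a closed random set is measurable by separability.
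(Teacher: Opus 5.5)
Your argument is correct and has the same architecture as the paper's proof. You argue by contradiction and pick an intermediate radius $\delta_n$ with $a_n\le\delta_n$, $\eps_n(s)=o(\delta_n)$ and $\delta_n=o\{\rho_n(r)\}$. Your choice $\max\{a_n,\sqrt{\eps_n(s)\rho_n(r)}\}$ works. The first-mentioned $\max\{a_n,n^{-1}\rho_n(r)\}$ does not dominate $\eps_n(s)$ when $a_n$ is small, so drop it. You then turn $C_n$ into a test with type-I error at most $\alpha+\alpha'+o(1)$ and type-II error at most $\alpha+o(1)$, contradicting \cref{thm:testing-lower}.

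The real difference is the test itself. The paper rejects when $C_n$ meets the open set $\{h:\normtwo{h-f_0}>\delta_n/2\}$. Measurability is then immediate from the Effros convention, and the type-II step uses only $\normtwo{f-f_0}\geq\dist(f,\B_t(M))\geq\delta_n$. You reject when $C_n$ misses $\B_t(M)$ or is wide, which avoids singling out $f_0$. However, your measurability justification is not sufficient as stated. In a non-locally-compact space, closedness of $\B_t(M)$ alone does not make the hitting event measurable under Effros measurability. What does make it work is that $\B_t(M)$ is compact in $L^2(dx)$ for $t>0$: it has a uniform approximation tail plus a bounded finite-dimensional head. Hence $\{C_n\cap\B_t(M)\neq\varnothing\}$ is the countable intersection of the hitting events for its open $1/k$-neighbourhoods.

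Your worry about the simpler test $\1\{f_0\notin C_n\text{ or }\diam(C_n)>K\eps_n(s)\}$ is also unfounded. Since $f_0\in\B_t(M)$, every alternative satisfies $\normtwo{f-f_0}\geq\delta_n>K\eps_n(s)$ for large $n$. So that version already works uniformly over all alternatives, not just the $f_\theta$, and it rests on exactly the observation the paper uses.
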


\begin{proof}
Assume such confidence sets exist.  Because $s>2r$,
\begin{equation}\label{eq:eps-smaller-rho}
 \eps_n(s)=o\{\rho_n(r)\}.
\end{equation}
We may suppose that $K > 1$. Set
\begin{equation}\label{eq:delta-reduction}
 b_n=\max\{a_n,K\eps_n(s)\},
 \qquad
 \delta_n=\{\rho_n(r)b_n\}^{1/2}.
\end{equation}
Then
\begin{equation}\label{eq:delta-relations}
 a_n=o(\delta_n),
 \qquad
 K\eps_n(s)=o(\delta_n),
 \qquad
 \delta_n=o\{\rho_n(r)\}.
\end{equation}
Let $f_0$ be the smooth constant from \cref{thm:testing-lower}.  Replacing
$C_n$ by its closure can only improve coverage and does not change its
diameter.  We may therefore assume that $C_n$ is a random closed subset of
$L^2(dx)$.  Under the standing random-set measurability convention, define
\begin{equation}\label{eq:confidence-to-test}
 \phi_n=
 \1\left\{
 C_n\cap
 \left\{h\in L^2(dx):
 \normtwo{h-f_0}>\frac{\delta_n}{2}\right\}
 \neq\varnothing
 \right\}.
\end{equation}
Under $f_0$, if $f_0\in C_n$ and $\phi_n=1$, then
$\diam(C_n)>\delta_n/2$.   
By honesty,
\eqref{eq:sharpness-small-diameter}, and
\eqref{eq:delta-relations},
\begin{equation}\label{eq:reduction-type-one}
 \Pp_{f_0,1}(\phi_n=1)
 \leq\alpha+\alpha'+o(1).
\end{equation}
Now let $f\in\F_r(M)$ satisfy
$\dist(f,\B_t(M))\geq\delta_n$.  Since
$f_0\in\B_t(M)$, we have $\normtwo{f-f_0}\geq\delta_n$.  If $f\in C_n$, then
$\phi_n=1$, so honesty gives
\begin{equation}\label{eq:reduction-type-two}
 \sup_{\substack{f\in\F_r(M)\\
 \dist(f,\B_t(M))\geq\delta_n}}
 \Pp_{f,1}(\phi_n=0)
 \leq\alpha+o(1).
\end{equation}
The alternatives in \eqref{eq:reduction-type-two} are contained in the
parameter space \eqref{eq:sharpness-parameter-space} for all large $n$ because
$\delta_n\gg a_n$.  Combining
\eqref{eq:reduction-type-one} and \eqref{eq:reduction-type-two} gives a testing
error sum at most $2\alpha+\alpha'+o(1)<1$, contradicting
\cref{thm:testing-lower}.
\end{proof}

\begin{remark}\label{rem:shell-maximality}
Fix confidence levels satisfying $2\alpha+\alpha'<1$.  Apply
\cref{thm:separation-sharp} to the $j$th shell with
$r=s_j$, $t=2s_j=s_{j+1}$, and $s=\beta_+$.  Since
$\beta_+>s_N\geq2s_j$ for every $j<N$, the condition $s>2r$ holds.
Moreover,
\[
 \F_{s_{j+1}}(M)\subseteq\B_{s_{j+1}}(M),
 \qquad
 \dist\bigl(f,\B_{s_{j+1}}(M)\bigr)
 \leq
 \dist\bigl(f,\F_{s_{j+1}}(M)\bigr).
\]
Thus the alternative class in \cref{thm:separation-sharp} is contained in
the corresponding separated shell.  It follows that the deletion radius in
\eqref{eq:separated-space} cannot be replaced by
$o\{\rho_n(s_j)\}$ while retaining honesty and the
$\eps_n(\beta_+)$ diameter on the top smooth class.  In this sense, the
separated parameter space is rate-sharp up to multiplicative constants.
\end{remark}

\subsection{No design smoothness required}

For $\gamma>0$, write $B^\gamma_{2,\infty}([0,1]^d)$ for the standard Besov
space defined using any boundary-wavelet basis of regularity larger than
$\gamma$.  At smoothness zero, we use the fixed basis and normalization in
\eqref{eq:Besov-norm}.  The next proposition shows that the overlap class is
not merely a class of piecewise smooth densities.

\begin{proposition}\label{prop:rough-density}
Assume $c<1<C$.  There is a density $g\in\G(c,C)$ such that
$g\notin B^\gamma_{2,\infty}([0,1]^d)$ for every $\gamma>0$.
\end{proposition}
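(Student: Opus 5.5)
The plan is to build $g$ as an explicit lacunary wavelet series around the constant $1$ and then read off its Besov norms from the detail blocks. Choose $\delta>0$ with $\delta\le\min\{1-c,C-1\}/2$. We want a $g=1+h$ with $\norm h_\infty\le\delta$ and $\int h=0$, so that $g\in\G(c,C)$ automatically. We also want $\normtwo{Q_jh}$ to decay more slowly than any power $2^{-j\gamma}$ along a subsequence of levels.

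The simplest concrete choice avoids wavelet $L^\infty$ bookkeeping by using Haar-type indicator oscillations at sparse dyadic scales. Take the first coordinate and, for levels $j_k=k^2$ (or $2^k$), set
\[
h(x)=\delta\sum_{k\ge1}k^{-2}\,\epsilon_{j_k}(x_1),\qquad \epsilon_j(t)=\operatorname{sign}\sin(2\pi 2^{j}t).
\]
Each $\epsilon_j$ has mean zero and modulus $1$, so $\norm h_\infty\le\delta\pi^2/6$. Rescaling $\delta$ makes $c\le g\le C$ and $\int g=1$ hold.

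Then I would show that $g\notin B^\gamma_{2,\infty}$ for each $\gamma>0$. Using the characterization with a boundary-wavelet basis of regularity $>\gamma$ (or equivalently the $L^2$ modulus of continuity characterization $\omega_2(g,u)\lesssim u^\gamma$, valid for $0<\gamma<1$), it suffices to handle small $\gamma$, since the spaces are nested. Test with translations $\tau_u$ in $x_1$ by $u=2^{-j_k-1}$, with a slight adjustment near the boundary by restricting to $[0,1/2]$. The term $\epsilon_{j_k}$ flips sign under this shift, contributing $\normtwo{\Delta_u\epsilon_{j_k}}\asymp1$. The coarser terms $\epsilon_{j_i}$ with $i<k$ change only on a set of measure $\lesssim 2^{j_i}u$, so their total contribution is $\lesssim\sum_{i<k}(2^{j_i-j_k})^{1/2}=o(1)$ by lacunarity. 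The finer terms $i>k$ are $\lesssim\sum_{i>k}i^{-2}\lesssim k^{-1}$. This gives $\omega_2(g,2^{-j_k-1})\gtrsim k^{-2}$, and $k^{-2}\gg 2^{-j_k\gamma}$ for every $\gamma>0$. Hence the modulus bound fails for every $\gamma$, and $g\notin B^\gamma_{2,\infty}$.

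The main obstacle is making the finer-term estimate honest, since $\sum_{i>k}i^{-2}\sim k^{-1}$ is not small compared with the $k^{-2}$ main term. To fix this I would use faster-decaying but still subpolynomial-in-$2^{-j}$ weights, for instance coefficients $2^{-k}$ at levels $j_k=k^2$. The tail $\sum_{i>k}2^{-i}\asymp2^{-k}$ then has to be compared more carefully. Two routes are available: choose signs or orthogonality so that the tail is orthogonal to the main increment, or bound the tail increment by $2^{-k-1}\cdot2$ against a main term $2^{-k}\cdot2$, which leaves a constant fraction. The remaining comparison is $2^{-k}$ versus $2^{-\gamma k^2}$, which always wins for large $k$.

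An alternative cleaner route is the wavelet side: with the fixed basis, put $h=\delta'\sum_k2^{-k}\,2^{-j_kd/2}\psi_{j_k,k_0}$-type blocks normalized so that $\normtwo{Q_{j_k}h}=2^{-k}$. The $L^\infty$ bound comes from $\norm{\sum_{\kappa}\psi_{j,\kappa}}_\infty\lesssim1$ by bounded overlap, using full interior families at each level. Mean zero holds automatically for detail wavelets. Then $2^{j_k\gamma}\normtwo{Q_{j_k}h}=2^{\gamma k^2-k}\to\infty$ gives non-membership directly by \eqref{eq:Besov-norm}-type characterizations, and the basis-independence of $B^\gamma_{2,\infty}$ transfers the conclusion to any regular basis.
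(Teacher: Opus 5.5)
Your closing wavelet argument is essentially the paper's proof. The paper puts $H_j=j^{-2}m_j^{-1/2}\sum_{k\in\mathcal K_j}\psi_{j,k}$ at every level $j\ge j_1$. It gets $\norm{H_j}_\infty\lesssim j^{-2}$ from bounded overlap and mean zero from the wavelet factor, reads off $Q_jh=H_j$, and concludes via the wavelet characterization of $B^s_{2,\infty}$ for $s<S$ plus the embedding $B^\gamma_{2,\infty}\subset B^s_{2,\infty}$. Your lacunary levels $j_k=k^2$ with weights $2^{-k}$ change nothing. You should, however, write the block explicitly as $2^{-k}m_{j_k}^{-1/2}\sum_{\kappa\in\mathcal K_{j_k}}\psi_{j_k,\kappa}$. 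A single wavelet $\psi_{j_k,k_0}$ cannot have $L^2$ norm $2^{-k}$ and bounded sup norm at the same time. Bounded overlap gives $\norm{\sum_\kappa\psi_{j,\kappa}}_\infty\lesssim2^{jd/2}$, so it is the normalized sum, not the raw sum, that is $O(1)$. Also, the fixed basis only has regularity above $S$, so you need the nesting step to handle $\gamma\ge S$.

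Your primary square-wave route is a genuinely different, more elementary argument. It is an explicit lacunary Rademacher-type series in $x_1$, with non-membership detected by the first-order $L^2$ modulus of smoothness rather than by wavelet coefficients. It gives a fully explicit density and avoids the $L^\infty$ bookkeeping for wavelet blocks. The cost is that you must import the equivalence between the modulus characterization and the wavelet definition of $B^\gamma_{2,\infty}([0,1]^d)$ for $0<\gamma<1$. The route does work, but the obstacle you flag is illusory, and your proposed fix is wrong as written.

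For $i>k$ you have $j_i\ge j_k+1$. So the shift $u=2^{-j_k-1}$ is an integer multiple of the period $2^{-j_i}$ of $\epsilon_{j_i}$, and $\Delta_u\epsilon_{j_i}\equiv0$. With your original weights $k^{-2}$ you therefore already get $\normtwo{\Delta_ug}\ge\delta\bigl(2k^{-2}\sqrt{1-u}-O(2^{-k})\bigr)$ on $\{x_1\le1-u\}$. The $O(2^{-k})$ comes from the coarser levels via $j_k-j_{k-1}=2k-1$; note that you need this term to be $o(k^{-2})$, not merely $o(1)$ as you wrote.

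By contrast, take weights $2^{-k}$ with the crude bound $\normtwo{\Delta_u\epsilon_{j_i}}\le2$. Then the tail is $2\sum_{i>k}2^{-i}=2\cdot2^{-k}$, not $2\cdot2^{-k-1}$. This exactly cancels the main term, so no constant fraction survives. That crude comparison would need weights like $4^{-k}$.
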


\begin{proof}
Fix $j_1$ sufficiently large and, for every $j\geq j_1$, take the interior
family $(\psi_{j,k})_{k\in\mathcal K_j}$ from
\eqref{eq:app-interior-family}.  Put
$m_j=|\mathcal K_j|\asymp2^{jd}$, as in
\eqref{eq:app-interior-cardinality}, and
\begin{equation}\label{eq:rough-h}
 H_j=\frac{j^{-2}}{\sqrt{m_j}}
 \sum_{k\in\mathcal K_j}\psi_{j,k},
 \qquad
 h=\sum_{j\geq j_1}H_j.
\end{equation}
Orthonormality gives $\normtwo{H_j}=j^{-2}$ and $\int H_j=0$.  Compact support,
bounded overlap, and $\norm{\psi_{j,k}}_\infty\lesssim2^{jd/2}$ give
$\norm{H_j}_\infty\lesssim j^{-2}$.  Hence the series converges uniformly to a
bounded mean-zero function.  Moreover, $Q_jh=H_j$, and therefore, for every
$0<s<S$,
\begin{equation}\label{eq:rough-detail-divergence}
 \sup_{j\geq j_1}2^{js}\normtwo{Q_jh}
 =\sup_{j\geq j_1}2^{js}j^{-2}=\infty.
\end{equation}
The boundary-wavelet characterization of Besov spaces on bounded domains
\cite{DeVoreKyriazisWang1998,gine2021mathematical} gives
$h\notin B^s_{2,\infty}([0,1]^d)$ for every $0<s<S$. Given any $\gamma>0$,
choose $0<s<\min\{\gamma,S\}$.  The embedding
$B^\gamma_{2,\infty}\subset B^s_{2,\infty}$ then shows that
$h\notin B^\gamma_{2,\infty}$.

Since $h$ is bounded and has mean zero, choose $\eta>0$ sufficiently small
that
\begin{equation*}
 \eta\norm{h}_\infty\leq\min\{1-c,C-1\},
\end{equation*}
and set $g=1+\eta h$.  Then $\int g=1$ and $c\leq g\leq C$.  Adding the
constant one cannot restore smoothness: constants belong to every
$B^\gamma_{2,\infty}$, and membership of $g$ would imply
$h=(g-1)/\eta\in B^\gamma_{2,\infty}$, a contradiction.
\end{proof}

\begin{proof}[Proof of \Cref{cor:threshold}]
The procedure in \cref{thm:main} is uniform over the full class $\G(c,C)$ and
uses neither a design-density smoothness index nor a design-density Besov
radius.  When $c<1<C$, the final claim
follows from \cref{prop:rough-density}.
\end{proof}

\subsection{Some design coverage is necessary}

The uniform lower bound on $g$ is qualitatively different from a smoothness
assumption: it guarantees that every region of positive Lebesgue volume is
observed with comparable frequency.  Some such coverage condition is
indispensable for the unweighted loss, as the following elementary
identifiability obstruction shows.

\begin{proposition}
\label{prop:vanishing-design}
Suppose a fixed design density $g$ vanishes on a nonempty open set
$U\subset[0,1]^d$.  Fix $0<s<S$, $M>0$, and $0<\alpha<1/2$.  If random sets
$C_n$ satisfy
\begin{equation}\label{eq:vanishing-honesty}
 \liminf_{n\to\infty}\inf_{f\in\F_s(M)}
 \Pp_{f,g}(f\in C_n)\geq1-\alpha,
\end{equation}
then there are $f_0\in\F_s(M)$ and $\delta>0$ such that
\begin{equation}\label{eq:vanishing-diameter}
 \liminf_{n\to\infty}
 \Pp_{f_0,g}\{\diam(C_n)\geq\delta\}
 \geq1-2\alpha.
\end{equation}
In particular, uniformly shrinking $L^2(dx)$ confidence sets are impossible.
\end{proposition}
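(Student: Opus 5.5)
The plan is a two-point argument that uses the fact that data under $g$ carry no information about $f$ on $U$. Since $U$ is open and nonempty, it contains a dyadic cube, and inside that cube we can place a fine-scale interior wavelet $\psi=\psi_{j,k}$ whose support lies in $U$; such wavelets exist by the interior family \eqref{eq:app-interior-family} for $j$ large. Take $f_0\equiv p_0$ with $p_0\in(0,1/4)$ chosen as in \eqref{eq:p0-small}, so that $f_0\in\F_s(M)$. Then set
\[
 f_1=p_0+b\,\psi ,
\]
where $b>0$ is a fixed small constant. The constant $b$ should be small enough that $2^{js}b\leq M/2$, which keeps $f_1\in\B_s(M)$ (only one detail block changes), and also that $b\norm{\psi}_\infty\leq p_0$, which gives $0\leq f_1\leq1$. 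Hence $f_1\in\F_s(M)$ as well. Set $\delta=\normtwo{f_1-f_0}/2=b/2>0$; this does not depend on $n$.

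Next, the two laws coincide. Because $f_0=f_1$ outside $\operatorname{supp}\psi\subset U$ and $g=0$ a.e.\ on $U$, the joint law of $(X,Y)$ is the same under $(f_0,g)$ and $(f_1,g)$. Indeed, $X$ has density $g$, and the conditional Bernoulli law of $Y$ differs only on a $g$-null set; under the pointwise-representative convention this does not alter the statistical law. Therefore $\Pp_{f_0,g}^{\otimes n}=\Pp_{f_1,g}^{\otimes n}$ for every $n$, and in particular $C_n$ has the same distribution under both parameters.

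Finally, combine this with honesty. By \eqref{eq:vanishing-honesty}, for $n$ large we have $\Pp_{f_i,g}(f_i\in C_n)\geq1-\alpha-o(1)$ for $i=0,1$. Since the laws are equal, both events hold under $\Pp_{f_0,g}$, so
\[
 \Pp_{f_0,g}(f_0\in C_n,\ f_1\in C_n)\geq1-2\alpha-o(1).
\]
On this event $\diam(C_n)\geq\normtwo{f_1-f_0}=2\delta\geq\delta$, which gives \eqref{eq:vanishing-diameter}. Since $\alpha<1/2$, the bound is nontrivial, and this yields the final ``in particular'' clause.

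The only step needing care is the construction of $f_1$: the perturbation must be compactly supported inside $U$ while remaining in the Besov ball and in $[0,1]$. This is handled by picking $j$ large enough for a compactly supported interior wavelet to fit inside $U$, and then shrinking $b$.
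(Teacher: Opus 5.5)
Your proposal is correct and follows essentially the same route as the paper: a two-point construction with a perturbation supported in $U$, equality of the laws under $(f_0,g)$ and $(f_1,g)$, and then honesty plus a union bound to force both functions into $C_n$. The only difference is cosmetic. You perturb by a single interior wavelet instead of the paper's $C_c^\infty(U)$ bump, so your Besov-norm bound is an exact one-block computation and does not rely on finiteness of $\norm{b}_{\mathbb B^s_{2,\infty}}$ for smooth compactly supported $b$.
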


\begin{proof}
Choose a nonnegative, nonzero function $b\in C_c^\infty(U)$ and a constant function 
$f_0$ such that
\[
 0<f_0<\frac12,
 \qquad
 \norm{f_0}_{\mathbb B^s_{2,\infty}}\leq\frac M2.
\]
Choose $u>0$ satisfying
\[
 u\leq
 \min\left\{
 \frac{M}{2\norm{b}_{\mathbb B^s_{2,\infty}}},
 \frac{1-f_0}{\norm b_\infty}
 \right\},
\]
and set
\[
 f_1=f_0+ub.
\]
Then $0\leq f_1\leq1$ and
\[
 \norm{f_1}_{\mathbb B^s_{2,\infty}}
 \leq
 \norm{f_0}_{\mathbb B^s_{2,\infty}}
 +u\norm{b}_{\mathbb B^s_{2,\infty}}
 \leq M.
\]
Thus $f_0,f_1\in\F_s(M)$. The laws under $(f_0,g)$ and $(f_1,g)$ are
identical, because the two regression functions differ only where $g=0$.
Under this common law, \eqref{eq:vanishing-honesty} and the union bound imply
that both $f_0$ and $f_1$ belong to $C_n$ with limiting probability at least
$1-2\alpha$.  On that event,
$\diam(C_n)\geq\normtwo{f_1-f_0}$.  Taking
$\delta=\normtwo{f_1-f_0}>0$ proves \eqref{eq:vanishing-diameter}.
\end{proof}

\appendix

\crefalias{section}{appendix}

\section{Wavelet background and notation}\label{app:wavelets}

Fix  $S>\beta_+$.  We use a compactly supported,
boundary-corrected orthonormal wavelet basis on $
 \X=[0,1]^d$ 
whose regularity and number of vanishing moments are both greater than  $S$.  The basis is obtained by tensorizing the interval construction of
Cohen, Daubechies, and Vial \cite{CohenDaubechiesVial1993}; see also
\cite[Section~4.3.5]{gine2021mathematical}.  We describe the resulting
functions, spaces, and projections at the level needed in this paper.  The
explicit formulas used to construct the finitely many boundary functions are
not needed.

\subsection{One-dimensional interval wavelets}

Begin with a compactly supported orthonormal Daubechies wavelet system on
$\mathbb R$.  It consists of a scaling function $\varphi$ and an associated
mother wavelet $\psi$.  The scaling functions describe approximation at a
given resolution, while the wavelets describe the additional detail revealed
when the resolution is increased.  For integers $j,k$, define
\begin{equation}\label{eq:app-whole-line-wavelets}
 \varphi^{\mathbb R}_{j,k}(x)
 =
 2^{j/2}\varphi(2^jx-k),
 \qquad
 \psi^{\mathbb R}_{j,k}(x)
 =
 2^{j/2}\psi(2^jx-k).
\end{equation}
The index $j$ is the resolution level: the functions in
\eqref{eq:app-whole-line-wavelets} are supported on intervals of length
comparable to $2^{-j}$.  The index $k$ specifies their location, and the
factor $2^{j/2}$ gives each function unit $L^2(\mathbb R)$ norm.

Choose the whole-line system to have H\"older--Zygmund regularity
$R_{\mathrm w}>S$ and an integer number $N_{\mathrm w}>S$ of vanishing
moments:
\begin{equation}\label{eq:app-whole-line-moments}
 \int_{\mathbb R}x^m\psi(x)\,dx=0,
 \qquad
 m=0,\ldots,N_{\mathrm w}-1.
\end{equation}
These are the regularity and moment assumptions needed for the Besov
characterization stated below.

The ordinary whole-line functions whose supports lie inside $(0,1)$ may be
used unchanged on the interval.  Those whose supports cross $0$ or $1$ cannot
simply be truncated: truncation would break orthonormality, the vanishing
moments of the wavelets, and the ability of the scaling spaces to reproduce
low-degree polynomials.  The Cohen--Daubechies--Vial construction replaces
these functions by finitely many boundary-adapted scaling functions and
wavelets.

More precisely, after relabeling, there is an integer $J_0\geq0$ such that,
for every $j\geq J_0$, the construction provides orthonormal families
\[
 \bigl(\varphi^I_{j,k}\bigr)_{k\in\mathcal I_j},
 \qquad
 \bigl(\psi^I_{j,k}\bigr)_{k\in\mathcal I_j}
\]
in $L^2([0,1])$, where $|\mathcal I_j|=2^j$.  Each family consists of
finitely many left- and right-boundary functions, whose number is independent
of $j$ and depends only on the chosen wavelet system and its boundary
correction, together with the ordinary whole-line dilates and translates
whose supports lie in $(0,1)$.  After $L^2$ normalization, the boundary
functions are level-$j$ dilates of fixed finite collections of prototypes.
We choose $J_0$ sufficiently large that the left- and right-boundary families
have disjoint supports at every level $j\geq J_0$.  This choice depends only
on the fixed whole-line wavelet system and its CDV boundary correction.

Define the one-dimensional approximation and detail spaces by
\begin{equation}\label{eq:app-one-dimensional-spaces}
 V_j^I
 =
 \operatorname{span}
 \{\varphi^I_{j,k}:k\in\mathcal I_j\},
 \qquad
 W_j^I
 =
 \operatorname{span}
 \{\psi^I_{j,k}:k\in\mathcal I_j\}.
\end{equation}
The displayed families are orthonormal bases of their respective spaces, and
\begin{equation}\label{eq:app-one-dimensional-mra}
 V_{j+1}^I=V_j^I\oplus W_j^I,
 \qquad
 L^2([0,1])
 =
 V_{J_0}^I\oplus\bigoplus_{j\geq J_0}W_j^I.
\end{equation}
All direct sums in this appendix are orthogonal.

For every $j\geq J_0$ and every polynomial $p$ of degree at most
$N_{\mathrm w}-1$, the restriction $p|_{[0,1]}$ belongs to $V_j^I$;
equivalently, the orthogonal projection onto $V_j^I$ reproduces
$p|_{[0,1]}$ exactly.  Since
$W_j^I\perp V_j^I$, every interval wavelet has the corresponding vanishing
moments:
\begin{equation}\label{eq:app-interval-moments}
 \int_0^1x^m\psi^I_{j,k}(x)\,dx=0,
 \qquad
 m=0,\ldots,N_{\mathrm w}-1.
\end{equation}
In particular, constants belong to every $V_j^I$, and every interval wavelet
has mean zero.  The boundary-adapted functions retain the same scale of
compact support and the same uniform localization properties as the ordinary
interior functions.  These properties are part of the
Cohen--Daubechies--Vial interval construction; see
\cite[Section~4]{CohenDaubechiesVial1993}.

\subsection{Tensor-product wavelets}

We obtain a basis on $\X=[0,1]^d$ by tensorization.  Set
\begin{equation}\label{eq:app-index-sets}
 \Lambda_j=\mathcal I_j^d,
 \qquad
 \mathcal E
 =
 \{0,1\}^d\setminus\{(0,\ldots,0)\}.
\end{equation}
For
\[
 \lambda=(k_1,\ldots,k_d)\in\Lambda_j,
\]
define the level-$j$ tensor-product scaling function
\begin{equation}\label{eq:app-tensor-scaling}
 \varphi_{j,\lambda}(x)
 =
 \prod_{\ell=1}^d\varphi^I_{j,k_\ell}(x_\ell),
 \qquad
 x=(x_1,\ldots,x_d)\in\X.
\end{equation}

For $e\in\{0,1\}$, write
\[
 u^0_{j,k}=\varphi^I_{j,k},
 \qquad
 u^1_{j,k}=\psi^I_{j,k}.
\]
For a tensor type
\[
 \mathbf e=(e_1,\ldots,e_d)\in\mathcal E,
\]
define
\begin{equation}\label{eq:app-tensor-wavelets}
 \psi^{\mathbf e}_{j,\lambda}(x)
 =
 \prod_{\ell=1}^d u^{e_\ell}_{j,k_\ell}(x_\ell).
\end{equation}
Thus $e_\ell=0$ selects a scaling function in coordinate $\ell$, while
$e_\ell=1$ selects a wavelet.  The excluded all-zero type gives the scaling
function in \eqref{eq:app-tensor-scaling}.  Every
$\mathbf e\in\mathcal E$ contains at least one wavelet factor and therefore
represents detail in at least one coordinate.

A tensor-product function may be boundary adapted in any coordinate whose
one-dimensional factor lies near an endpoint.  If every factor is an
ordinary interior function, then the tensor product agrees with an ordinary
whole-line tensor-product wavelet whose support lies in the interior of
$\X$.

Define the multivariate approximation and detail spaces by
\begin{equation}\label{eq:app-multivariate-spaces}
 V_j
 =
 \operatorname{span}
 \{\varphi_{j,\lambda}:\lambda\in\Lambda_j\},
 \qquad
 W_j
 =
 \operatorname{span}
 \left\{
 \psi^{\mathbf e}_{j,\lambda}:
 \mathbf e\in\mathcal E,\ \lambda\in\Lambda_j
 \right\}.
\end{equation}
Tensorizing \eqref{eq:app-one-dimensional-mra} gives
\begin{equation}\label{eq:app-multivariate-mra}
 V_{j+1}=V_j\oplus W_j,
 \qquad
 L^2(\X)
 =
 V_{J_0}\oplus\bigoplus_{j\geq J_0}W_j.
\end{equation}
Since $|\mathcal I_J|=2^J$,
\begin{equation}\label{eq:app-dimension}
 D_J:=\dim(V_J)=|\Lambda_J|=2^{Jd}.
\end{equation}

\subsection{Projections and the Besov decomposition}

We use lower-case $j$ for a running resolution level and upper-case $J$ for
a particular truncation level.  Let $P_J$ denote the orthogonal projection
onto $V_J$, and let $Q_j$ denote the orthogonal projection onto $W_j$.
Then
\begin{equation}\label{eq:app-PJ}
 P_Jh
 =
 \sum_{\lambda\in\Lambda_J}
 \ip{h}{\varphi_{J,\lambda}}\varphi_{J,\lambda}
\end{equation}
and
\begin{equation}\label{eq:app-QJ}
 Q_jh
 =
 \sum_{\mathbf e\in\mathcal E}
 \sum_{\lambda\in\Lambda_j}
 \ip{h}{\psi^{\mathbf e}_{j,\lambda}}
 \psi^{\mathbf e}_{j,\lambda}.
\end{equation}
By \eqref{eq:app-multivariate-mra},
\[
 Q_j=P_{j+1}-P_j,
\]
and every $h\in L^2(\X)$ has the expansion
\begin{equation}\label{eq:app-wavelet-expansion}
 h=P_{J_0}h+\sum_{j\geq J_0}Q_jh
\end{equation}
with convergence in $L^2(\X)$.  Parseval's identity gives
\begin{equation}\label{eq:app-detail-energy}
 \normtwo{Q_jh}^2
 =
 \sum_{\mathbf e\in\mathcal E}
 \sum_{\lambda\in\Lambda_j}
 \left|
 \ip{h}{\psi^{\mathbf e}_{j,\lambda}}
 \right|^2.
\end{equation}

The quantity $\normtwo{Q_jh}^2$ is therefore the total $L^2$ energy of the
wavelet coefficients at spatial scale $2^{-j}$.  The condition
\[
 2^{js}\normtwo{Q_jh}\lesssim1
\]
says that (the square root of) this detail energy decays at least as fast as $2^{-js}$.  This is
the multiscale interpretation of the Besov norm in
\eqref{eq:Besov-norm}.  For every $0<s<S$, that norm is equivalent to the
standard $B^s_{2,\infty}(\X)$ norm; see
\cite{DeVoreKyriazisWang1998}.  At $s=0$, we use
\eqref{eq:Besov-norm} as the definition of the endpoint space.

For later use, define the vector of level-$J$ scaling functions and the
associated projection kernel by
\begin{equation}\label{eq:app-Phi-kernel}
 \Phi_J(x)
 =
 \bigl(\varphi_{J,\lambda}(x)\bigr)_{\lambda\in\Lambda_J}
 \in\mathbb R^{D_J},
 \qquad
 K_J(x,z)
 =
 \Phi_J(x)^\top\Phi_J(z).
\end{equation}
Equivalently,
\[
 K_J(x,z)
 =
 \sum_{\lambda\in\Lambda_J}
 \varphi_{J,\lambda}(x)\varphi_{J,\lambda}(z),
\]
and $K_J$ is the integral kernel of $P_J$:
\begin{equation}\label{eq:app-kernel-representation}
 P_Jh(x)=\int_\X K_J(x,z)h(z)\,dz.
\end{equation}

\subsection{Properties used in the proofs}

\begin{proposition}\label{prop:app-wavelet-estimates}
There is a constant $A_\psi<\infty$, depending only on the fixed
one-dimensional wavelet system, its boundary correction, and $d$, with the
following properties.  All implicit constants below have the same permitted
dependence.

\begin{enumerate}[label=\textup{(\roman*)}]
\item\label{item:app-localization}
For every $j\geq J_0$, every $\lambda\in\Lambda_j$, and every
$\mathbf e\in\mathcal E$,
\[
 \operatorname{diam}
 \bigl(\operatorname{supp}\varphi_{j,\lambda}\bigr)
 \leq A_\psi2^{-j},
 \qquad
 \operatorname{diam}
 \bigl(\operatorname{supp}\psi^{\mathbf e}_{j,\lambda}\bigr)
 \leq A_\psi2^{-j},
\]
and
\[
 \norm{\varphi_{j,\lambda}}_\infty
 \leq A_\psi2^{jd/2},
 \qquad
 \norm{\varphi_{j,\lambda}}_1
 \leq A_\psi2^{-jd/2},
\]
\[
 \norm{\psi^{\mathbf e}_{j,\lambda}}_\infty
 \leq A_\psi2^{jd/2},
 \qquad
 \norm{\psi^{\mathbf e}_{j,\lambda}}_1
 \leq A_\psi2^{-jd/2}.
\]
At each fixed level, every point of $\X$ belongs to the supports of at most
$A_\psi$ scaling functions and, across all tensor types, at most $A_\psi$
detail functions.

\item\label{item:app-kernel}
For every $J\geq J_0$,
\begin{equation}\label{eq:app-kernel-bounds}
 \sup_{x\in\X}K_J(x,x)\leq A_\psi D_J,
 \qquad
 \sup_{x\in\X}\int_\X|K_J(x,z)|\,dz\leq A_\psi.
\end{equation}
Consequently,
\begin{equation}\label{eq:app-Linfty-stability}
 \norm{P_Jh}_\infty\leq A_\psi\norm h_\infty.
\end{equation}

\item\label{item:app-nesting}
If $0\leq s\leq t<S$, then
\begin{equation}\label{eq:app-nesting}
 \norm{h}_{\mathbb B^s_{2,\infty}}
 \leq
 \norm{h}_{\mathbb B^t_{2,\infty}}.
\end{equation}
Consequently,
\begin{equation}\label{eq:app-class-nesting}
 \B_t(M)\subseteq\B_s(M),
 \qquad
 \F_t(M)\subseteq\F_s(M).
\end{equation}

\item\label{item:app-tail}
If $s\geq\beta_->0$, $h\in\B_s(M)$, and $J\geq J_0$, then
\begin{equation}\label{eq:app-wavelet-tail}
 \normtwo{(I-P_J)h}^2
 \leq
 \frac{M^2}{1-2^{-2\beta_-}}\,2^{-2Js}.
\end{equation}

\item\label{item:app-constants}
Constant functions belong to $V_{J_0}$.  Hence, if $b$ is constant, then
\[
 P_{J_0}b=b,
 \qquad
 Q_jb=0,\quad j\geq J_0,
\]
and, since $|\X|=1$,
\begin{equation}\label{eq:app-constant-Besov-norm}
 \norm{b}_{\mathbb B^s_{2,\infty}}
 =
 2^{J_0s}|b|.
\end{equation}

\item\label{item:app-interior}
Fix the tensor type
\[
 \mathbf e_\circ=(1,0,\ldots,0)\in\mathcal E.
\]
For every sufficiently large $j$, let $\mathcal K_j$ index the
tensor-product detail functions of type $\mathbf e_\circ$ for which every
one-dimensional factor is an ordinary, unmodified interior function.  Relabel this family as
\begin{equation}\label{eq:app-interior-family}
 \bigl(\psi_{j,k}\bigr)_{k\in\mathcal K_j}.
\end{equation}
It is an orthonormal family in $W_j$.  There is a constant
$N_{\mathrm{ov}}<\infty$, independent of $j$, such that every point of
$\X$ belongs to the supports of at most $N_{\mathrm{ov}}$ members of the
family, and 
\begin{equation}\label{eq:app-interior-cardinality}
 m_j:=|\mathcal K_j|\asymp2^{jd}.
\end{equation}
Moreover,
\begin{equation}\label{eq:app-interior-mean-zero}
 \int_\X\psi_{j,k}(x)\,dx=0
\end{equation}
and
\begin{equation}\label{eq:app-interior-norms}
 \norm{\psi_{j,k}}_\infty\lesssim2^{jd/2},
 \qquad
 \norm{\psi_{j,k}}_1\lesssim2^{-jd/2},
\end{equation}
uniformly in $j$ and $k$.
\end{enumerate}
\end{proposition}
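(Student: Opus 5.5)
The plan is to establish the six items in order; each rests on the Cohen--Daubechies--Vial interval construction together with tensorization. Item \ref{item:app-localization} comes directly from the one-dimensional facts. Every interval function, interior or boundary, is a level-$j$ dilate of one of finitely many fixed compactly supported bounded prototypes. So its support has length $\lesssim 2^{-j}$, its sup norm is $\lesssim 2^{j/2}$, and its $L^1$ norm is $\lesssim 2^{-j/2}$. Taking products of $d$ factors gives the stated bounds with exponents $jd/2$ and $-jd/2$, and the support diameter picks up at most a factor $\sqrt d$. The overlap count works the same way. At level $j$ a point of $[0,1]$ meets the supports of a bounded number of interior translates, because the translates are $2^{-j}$-spaced with supports of length $\lesssim2^{-j}$, and also of a bounded number of boundary functions. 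Raising this one-dimensional count to the $d$-th power, and multiplying by $|\mathcal E|=2^d-1$ for the detail functions, gives a constant $A_\psi$.

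For item \ref{item:app-kernel}, write $K_J(x,x)=\sum_\lambda\varphi_{J,\lambda}(x)^2$. At most $A_\psi$ terms are nonzero and each is at most $A_\psi^2 2^{Jd}$, which gives the first bound after enlarging the constant. For the second bound,
\[
\int|K_J(x,z)|\,dz\le\sum_{\lambda:\,x\in\operatorname{supp}\varphi_{J,\lambda}}\norm{\varphi_{J,\lambda}}_\infty\norm{\varphi_{J,\lambda}}_1\lesssim 1.
\]
The $L^\infty$ stability \eqref{eq:app-Linfty-stability} then follows from the kernel representation \eqref{eq:app-kernel-representation}.

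Item \ref{item:app-nesting} is immediate from the definition \eqref{eq:Besov-norm}. For $s\le t$ we have $2^{J_0 s}\le 2^{J_0 t}$ and $2^{js}\le 2^{jt}$ for every $j\ge J_0$, because $J_0\ge0$. The class inclusions follow at once. For item \ref{item:app-tail}, expand $(I-P_J)h=\sum_{j\ge J}Q_jh$, which is an orthogonal sum. Then
\[
\normtwo{(I-P_J)h}^2\le M^2\sum_{j\ge J}2^{-2js}=\frac{M^2 2^{-2Js}}{1-2^{-2s}}\le\frac{M^2 2^{-2Js}}{1-2^{-2\beta_-}},
\]
where the last step uses $s\ge\beta_-$. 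For item \ref{item:app-constants}, constants are degree-zero polynomials. They lie in $V^I_{J_0}$ by polynomial reproduction, hence their tensor products lie in $V_{J_0}$. So $P_{J_0}b=b$ and all details vanish, and $\normtwo{b}=|b|$ because $|\X|=1$.

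The main point of care is item \ref{item:app-interior}. I would take $\mathcal K_j$ to index the products $\psi^{\mathbb R}_{j,k_1}(x_1)\prod_{\ell\ge2}\varphi^{\mathbb R}_{j,k_\ell}(x_\ell)$ whose one-dimensional supports all lie in $(0,1)$. By the construction these are genuine members of the orthonormal basis of $W_j$ with type $\mathbf e_\circ$. The number of interior indices in one dimension is $2^j$ minus a fixed number of boundary indices, which is $\asymp2^j$ for $j$ large. Hence $m_j\asymp2^{jd}$. The mean-zero property follows by Fubini from the vanishing zeroth moment of $\psi$ in the first coordinate. The norm bounds and the overlap constant $N_{\mathrm{ov}}$ are inherited from item \ref{item:app-localization}. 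The only genuine obstacle is making sure that the cited CDV properties (the prototype description of boundary functions, polynomial reproduction in $V^I_j$, and the $2^j$ cardinality for $j\ge J_0$) are stated precisely enough to justify the uniform constants; the remaining steps are bookkeeping.
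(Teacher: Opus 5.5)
Your proposal is correct and follows the paper's proof essentially item by item. The paper likewise uses CDV prototypes plus tensorization for the localization and overlap bounds, and bounded overlap with $\norm{\varphi_{J,\lambda}}_\infty\norm{\varphi_{J,\lambda}}_1\lesssim 1$ for the kernel estimates; it gets nesting from monotonicity of $2^{js}$ in $s$, the tail bound from a geometric sum, the constants from polynomial reproduction, and the interior family from the $(2^j-O(1))^d$ count together with Fubini.
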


\begin{proof}
The Cohen--Daubechies--Vial construction supplies the one-dimensional
orthonormality, stable boundary structure, localization bounds, bounded
overlap, reproduction of constants by the scaling spaces, and zero mean of
the wavelets stated above.  Tensorization gives the multivariate orthonormal
basis and the decomposition \eqref{eq:app-multivariate-mra}.  It also gives
the localization and norm bounds in part~\ref{item:app-localization}: the
one-dimensional bounds multiply across coordinates, and only finitely many
tensor types occur.  The bounded-overlap property is likewise preserved,
with a constant depending only on $d$.

For fixed $x$, only a bounded number of the values
$\varphi_{J,\lambda}(x)$ are nonzero.  Part~\ref{item:app-localization} and
\eqref{eq:app-dimension} therefore give
\[
 K_J(x,x)
 =
 \sum_{\lambda\in\Lambda_J}
 |\varphi_{J,\lambda}(x)|^2
 \lesssim
 2^{Jd}
 =
 D_J.
\]
Similarly,
\[
\begin{aligned}
 \int_\X|K_J(x,z)|\,dz
 &\leq
 \sum_{\lambda:\,\varphi_{J,\lambda}(x)\neq0}
 |\varphi_{J,\lambda}(x)|
 \norm{\varphi_{J,\lambda}}_1 \\
 &\lesssim
 2^{Jd/2}2^{-Jd/2}
 \lesssim1.
\end{aligned}
\]
The kernel representation \eqref{eq:app-kernel-representation} then yields
\eqref{eq:app-Linfty-stability}.

If $0\leq s\leq t$, then
\[
 2^{J_0s}\leq2^{J_0t},
 \qquad
 2^{js}\leq2^{jt},
 \quad j\geq J_0.
\]
The definition \eqref{eq:Besov-norm} therefore gives
\eqref{eq:app-nesting}, and the class inclusions in
\eqref{eq:app-class-nesting} follow.

For $h\in\B_s(M)$, the orthogonal decomposition
\eqref{eq:app-wavelet-expansion} gives
\[
 \normtwo{(I-P_J)h}^2
 =
 \sum_{j\geq J}\normtwo{Q_jh}^2.
\]
The definition of the Besov ball gives
\[
 \normtwo{Q_jh}\leq M2^{-js}.
\]
Consequently,
\[
\begin{aligned}
 \normtwo{(I-P_J)h}^2
 &\leq
 M^2\sum_{j\geq J}2^{-2js} \\
 &=
 \frac{M^2}{1-2^{-2s}}\,2^{-2Js} \\
 &\leq
 \frac{M^2}{1-2^{-2\beta_-}}\,2^{-2Js},
\end{aligned}
\]
which proves \eqref{eq:app-wavelet-tail}.

Since constants belong to every one-dimensional scaling space $V_j^I$,
tensorization places constants in every $V_j$.  Part~\ref{item:app-constants}
then follows from \eqref{eq:Besov-norm}.

Finally, at level $j$, only a fixed number of one-dimensional functions near
each endpoint are boundary adapted or have support meeting an endpoint.
Thus, in each coordinate, $2^j-O(1)$ indices give ordinary interior
functions.  For the fixed tensor type $\mathbf e_\circ$, the number of
multi-indices for which every factor is interior is therefore
\[
 \bigl(2^j-O(1)\bigr)^d
 =
 2^{jd}-O\bigl(2^{j(d-1)}\bigr)
 \asymp2^{jd}.
\]
This proves \eqref{eq:app-interior-cardinality}.  The selected functions are
members of the orthonormal basis of $W_j$, so they are orthonormal and inherit
the bounded-overlap property.  Since
$\mathbf e_\circ=(1,0,\ldots,0)$, each tensor product contains a wavelet
factor in its first coordinate.  That factor has integral zero, and Fubini's
theorem gives \eqref{eq:app-interior-mean-zero}.  The bounds in
\eqref{eq:app-interior-norms} follow from
part~\ref{item:app-localization}.
\end{proof}

\bibliographystyle{amsplain}
{\small
\bibliography{adaptive}
}

\end{document}